\shorttitle{Diffusion scale steady-state tightness} 
\newcommand{\BE}{{\mathbb E}}
\newcommand{\BP}{{\mathbb P}}
\newcommand{\CSSS}{{\mathcal S}}
\newcommand{\CC}{{\mathcal C}}
\newcommand{\CJJJ}{{\mathcal J}}
\newcommand{\CI}{{\mathcal I}}
\newcommand{\CE}{{\mathcal E}}
\providecommand{\abs}[1]{\left\lvert#1\right\rvert}
\newcommand{\floor}[1]{\lfloor#1\rfloor}
\newcommand{\beql}[1]{\begin{equation}\label{#1}}
\newcommand{\eqn}[1]{(\ref{#1})}
\begin{document}

\title{Diffusion scale tightness\\ of invariant distributions\\ of a large-scale flexible service system}

\authorone[Bell Labs, Alcatel-Lucent]{A. L. Stolyar}

\addressone{Bell Labs, Alcatel-Lucent,
600 Mountain Ave., 2C-322,
Murray Hill, NJ 07974, USA \texttt{stolyar@research.bell-labs.com}}

\begin{abstract}
A large-scale service system with multiple customer classes and multiple server pools is considered, with the mean service time depending both on the customer class and server pool. The allowed activities (routing choices) form a tree (in the graph with vertices being both customer classes and server pools). We study the behavior of the system under a {\em Leaf Activity Priority} (LAP) policy, introduced in \cite{SY2012}.
An asymptotic regime is considered, where the arrival rate of customers and number of servers in each pool tend to infinity in proportion to a scaling parameter $r$, while the overall system load remains strictly subcritical. We prove tightness of diffusion-scaled 
(centered at the equilibrium point and scaled down by $r^{-1/2}$) invariant distributions.
As a consequence, we obtain a limit interchange result: the 
limit of diffusion-scaled invariant distributions is equal to the invariant distribution of the limiting diffusion process.
\end{abstract}

\vspace{0.05in}

\keywords{ Many server models;
priority discipline; fluid and diffusion limits;
tightness of invariant distributions;
limit interchange}

\vspace{0.05in}

\ams{60K25}{60F17}


\section{Introduction}

Large-scale heterogeneous flexible service systems naturally arise as models of
large call/contact centers \cite{AksinArmonyMehrotra,GansKooleMandelbaum}, large computer farms (used in network cloud data centers), etc.
More specifically, in this paper we consider a
service system with multiple customer and server types (or classes), 
where the arrival rate of class $i$ customers is $\Lambda_i$, the service rate of a class $i$ customer by a type $j$ server is $\mu_{ij}$, 
and the server pool $j$ size (the number of type $j$ servers) is $B_j$.
It is important that the service rate $\mu_{ij}$
in general depends on both the customer type $i$ and server type $j$.
Customers waiting for service are queued, and they 
cannot leave the system before their service is complete.
The system is ``large-scale'' in the sense that the input rates $\Lambda_i$
and pool sizes $B_j$ are large. More precisely, we will consider the 
``many-servers'' asymptotic regime, 
in which the arrival rates $\Lambda_i$ and pool sizes $B_j$ scale up
to infinity in proportion to a scaling parameter $r$, i.e. $\Lambda_i=\lambda_i r$,
$B_j = \beta_j r$, while the service rates $\mu_{ij}$ remain constant.
Furthermore, in this paper we assume that (appropriately defined) system capacity
exceeds the (appropriately defined) traffic load by $O(r)$ amount -- i.e. the system
is {\em strictly subcritically} loaded. 
(This is different from {\em Halfin-Whitt} ``many-servers'' regime,
in which the capacity exceeds the load by $O(\sqrt{r})$.)

If under a given control policy the system is stable, i.e., roughly speaking,
it has a stationary distribution such that the queues are stochastically bounded, 
then the average number of occupied 
servers in a stationary regime is of course $O(r)$. 
A ``good'' control policy would keep the steady-state 
system state within $O(\sqrt{r})$ of 
its {\em equilibrium point}, which depends 
on the system parameters and on the policy itself.
More precisely, this means that the sequence (in $r$) of the
system stationary distributions, centered at equilibrium point
and scaled down by $r^{-1/2}$, is tight.
We will refer to this property as {\em $r^{1/2}$-scale, or diffusion-scale,
 tightness (of invariant distributions)}.

It is typically easy to construct a policy ensuring 
the diffusion-scale tightness,
{\em if the system 
parameters $\lambda_i$ and $\mu_{ij}$ are known in advance}.
(It is natural to assume that pool sizes {\em are} available 
to any control policy.)
In this case the equilibrium point
can be computed in advance,
and then the appropriate fractions of each input flow routed
to appropriate server pools. (See discussion in \cite{SY10}.)
It is much more challenging to establish this property 
for ``blind'' policies, which do not ``know'' 
parameters $\lambda_i$ and $\mu_{ij}$.
In fact, as shown in \cite{SY10}, 
under a very natural {\em Largest-Queue, Freest-Server Load Balancing} (LQFS-LB) algorithm
(which is a special case of the QIR policy in \cite{Gurvich_Whitt}),
the diffusion-scale tightness does not hold in general.
LQFS-LB assumes that the set of allowed "activities" $(ij)$ (those
with $\mu_{ij}>0$) is known (while the 
actual $\mu_{ij}$ values may not be)
and forms a tree in the graph with vertices being customer and server types 
-- let us refer to this
as the {\em tree assumption}; otherwise, the LQFS-LB is blind.

Another example of a blind policy (which also requires the tree assumption) is the 
{\em Leaf Activity Priority} (LAP) algorithm, introduced
in \cite{SY2012}.
(LAP policy is formally defined in Section~\ref{section:model}, and its
features and assumptions, including the tree assumption,
are discussed in Section~\ref{sec-lap-discussion}.)
It was shown in \cite{SY2012}, that LAP ensures $r^{1/2+\epsilon}$-scale
tightness of invariant distributions, for any $\epsilon>0$.

\subsection{Main result and contributions} 

In this paper we prove that, in fact, the diffusion-scale (i.e., $r^{1/2}$-scale) 
tightness of invariant distributions holds under the LAP algorithm.
We use the weaker, $r^{1/2+\epsilon}$-scale tightness result
of \cite{SY2012} as a starting point, and make an additional step 
to obtain the diffusion scale tightness from it.
This additional step is non-trivial and is not a simple extension 
of the technique in \cite{SY2012}. More specifically, for establishing 
$r^{1/2+\epsilon}$-scale tightness in \cite{SY2012}, it suffices to work
with the process under several {\em fluid} scalings ("standard" fluid
scaling for many-servers regime, as well as {\em hydrodynamic} and 
{\em local-fluid} scalings). In this paper, to prove the diffusion-scale
tightness, we need to also work with the process under diffusion scaling.
Informally speaking, the major technical challenge here is
in showing that the diffusion-scaled process
is uniformly close to the corresponding limiting diffusion process
{\em on time intervals of the length increasing with $r$, namely $O(\log r)$-long intervals.}

The diffusion-scale tightness under LAP  in turn
implies a limit interchange property: the limit of (diffusion-scaled)
invariant distributions is equal to the invariant distribution
of the limit (diffusion) process. Proving this limit interchange
in many-servers regime is very challenging,
especially for general
models with multiple customer and server classes; the reason is
 precisely 
the difficulty of establishing the diffusion-scale tightness.

Perhaps more important than establishing the tightness and limit interchange
specifically for the LAP policy, is the fact that our technique seems quite
generic, and may apply to other policies and/or other many-servers
models. Speaking very informally,
the combination of results and proofs in \cite{SY2012} and this paper gives 
technical ``blocks'' which allow one to establish 
the diffusion-scale tightness as long as the following two
properties hold:\\ 
(a) Global stability on the fluid-scale ($r$-scale), i.e.
convergence of fluid-scaled trajectories to the equilibrium point 
(plus an additional, related property);\\
(b) Local stability of the linear system in the neighborhood 
of the equilibrium point, i.e. the drift matrix of the limiting diffusion
process has all eigenvalues with negative real parts.\\
Given properties (a) and (b), our approach is to show tightness in several steps,
on the increasingly fine scales: fluid ($r$), then $r^{1/2+\epsilon}$, then 
diffusion ($r^{1/2}$) scale. 
We will make this discussion more specific in Section~\ref{sec-discussion}.

The distinctive feature of this approach, as opposed to most of the 
previous results on the diffusion-scale tightness for many-server models
(see \cite{Gamarnik_Momcilovic,Gamarnik_Stolyar,SY10}), is that
it does {\em not} rely on a single common Lyapunov function. 
(Finding/constructing a common Lyapunov function is usually a difficult task,
especially for the models with multiple server pools, like the one in this paper.)
We remind that in this paper we consider a system under strictly subcritical load,
and parts of our analysis do use this assumption.

\subsection{Brief literature review} 

A general overview of many-servers models, results and applications
to call centers can be found in \cite{GansKooleMandelbaum,AksinArmonyMehrotra}. For control policies for general models,
with multiple customer and server types, including blind policies, see e.g.
\cite{Atar2009,Armony_Ward,Gurvich_Whitt,Stolyar_Tezcan,Stolyar_Tezcan_underload,SY10,SY2012}
and references therein. Overviews of diffusion scale tightness (and limit interchange) results for single-pool
models in the many-servers Halfin-Whitt regime can be found, e.g., in \cite{Gamarnik_Momcilovic,GamarnikGoldbergGGN,Gamarnik_Stolyar}.
The diffusion scale tightness for the LQFS-LB policy, with the tree assumption and additionally assuming 
that the service rate (if non-zero) depends only on the server type, was proved in \cite{SY10}. The results in \cite{Gamarnik_Momcilovic,Gamarnik_Stolyar,SY10} use 
a common Lyapunov function; the work \cite{GamarnikGoldbergGGN}
does {\em not} use a Lyapunov function -- it relies instead on a
sample-path monotonicity/majorization property for a single-pool 
system under first-come-first-serve discipline.

\subsection{Layout of the rest of the paper} 
The model and the main result are given in Sections~\ref{section:model} and \ref{sec-main-result}, respectively. Section~\ref{sec-main-proof} contains the proofs. 
In concluding Section~\ref{sec-discussion} we discuss the results and technique.

\section{Model}
\label{section:model}

The model we consider is same as that in \cite{SY2012}. To improve self-containment
of this paper, we repeat the necessary definitions in this section.

\subsection{The model; Static Planning Problem}
\label{sec-SPP}

Consider the system in which there are $I$ customer classes, labeled $1,2,\dotsc,I$, and $J$ server pools, labeled $1,2,\dotsc,J$. 
(Servers within pool $j$ are referred to as class $j$ servers. Also,
throughout this paper the terms ``class'' and ``type'' are used interchangeably.)
The sets of customer classes and server pools will be denoted by $\CI$ and $\CJJJ$, respectively. We will use the indices $i$, $i'$ to refer to customer classes, and $j$, $j'$ to refer to server pools.

We are interested in the scaling properties of the system as it grows large. 
Namely, we consider a sequence of systems indexed by a scaling parameter $r$. As $r$ grows, the arrival rates and the sizes of the service pools, but not the speed of service, increase. Specifically, in the $r$th system, customers of type $i$ enter the system as a Poisson process of rate $\lambda_i r$, 
while the $j$th server pool has $\beta_j r$ individual servers. (All $\lambda_i$ and $\beta_j$ are positive parameters.) Customers may be accepted for service immediately upon arrival, or enter a queue; there is a separate queue for each customer type. Customers do not abandon the system. When a customer of type $i$ is accepted for service by a server in pool $j$, the service time is exponential of rate $\mu_{ij}$; the service rate depends both on the customer type and the server type, but \emph{not} on the scaling parameter $r$. If customers of type $i$ cannot be served by servers of class $j$, the service rate is $\mu_{ij} = 0$.

\begin{rem}
Strictly speaking, the quantity $\beta_j r$ may not be an integer, so we should define the number of servers in pool $j$ as, say, $\floor{\beta_j r}$. However, the change is not substantial, and will only unnecessarily complicate the notation.
\end{rem}

Consider the following
{\em static planning problem} (SPP):
\begin{subequations}\label{eqn:Static LP}
\begin{equation}
\min_{\lambda_{ij}^\circ, \rho} \rho,
\end{equation}
subject to
\beql{lp-constraint0}
\lambda_{ij}^\circ \geq 0, ~~\forall i,j
\end{equation}
\beql{lp-constraint1}
\sum_j \lambda_{ij}^\circ = \lambda_i , ~~\forall i
\end{equation}
\beql{lp-constraint2}
\sum_i \lambda_{ij}^\circ / (\beta_j \mu_{ij}) \leq \rho , ~~\forall j.
\end{equation}
\end{subequations}

Throughout this paper we will always make the following two assumptions about the solution to the SPP \eqref{eqn:Static LP}:
\begin{assumpt}[Complete resource pooling]
\label{assuption: CRP}
The SPP \eqref{eqn:Static LP} has a unique optimal solution $\{\lambda_{ij}^\circ, ~i\in \CI, ~j\in \CJJJ\}, \rho$. Define the \emph{basic activities} to be the pairs, or edges, $(ij)$ for which $\lambda_{ij}^\circ > 0$. Let $\CE$ be the set of basic activities. We further assume that the unique optimal solution is such that $\CE$ forms a  tree in the (undirected) graph with vertices set $\CI \cup \CJJJ$.
\end{assumpt}

\begin{assumpt}[Strictly subcritical load]
\label{CRP:2} The optimal solution to \eqref{eqn:Static LP} has $\rho < 1$.
\end{assumpt}

\begin{rem}
Assumption~\ref{assuption: CRP} is the \emph{complete resource pooling} (CRP) condition, which holds ``generically'' in a certain sense; see \cite[Theorem 2.2]{Stolyar_Tezcan}. Assumption~\ref{CRP:2} is essential for the main result of the paper.
\end{rem}

We assume that the basic activity tree is known in advance, and restrict our attention to the basic activities only. Namely, we assume that a type $i$ customer service in pool $j$ is allowed only if $(ij)\in\CE$. (Equivalently, we can a priori assume that $\CE$ is the set of {\em all} possible activities, i.e. $\mu_{ij} = 0$ when $(ij)\not\in\CE$, and $\CE$ is a tree. In this case CRP requires that all feasible activities are basic.) 
For a customer type $i$, let $\CSSS(i) = \{j: (ij)\in\CE \}$; for a server type $j$, let $\CC(j) = \{i: (ij)\in\CE \}$.

\subsection{Leaf activity priority (LAP) policy}
\label{section:LAP}

We analyze the performance of the following policy, which we call {\em leaf activity priority} (LAP).
The first step in its definition is the assignment of priorities to customer classes and activities.

Consider the basic activity tree, and assign priorities to the edges as follows. First, we assign priorities to customer classes by iterating the following procedure:
\begin{enumerate}
\item Pick a leaf of the tree;
\item If it is a customer class (rather than a server class), assign to it the highest priority that hasn't yet been assigned;
\item Remove the leaf from the tree.
\end{enumerate}
Without loss of generality, we assume the customer classes are numbered in order of priority (with 1 being highest). We now assign priorities to the edges of the basic activity tree by iterating the following procedure:
\begin{enumerate}
\item Pick the highest-priority customer class;
\item If this customer class {\em is} a leaf, pick the edge going out of it,
assign this edge the highest priority that hasn't yet been assigned, and remove the edge together with the customer class;
\item If this customer class is {\em not} a leaf, then pick any edge from it to a server class leaf (such necessarily exists),
assign to this edge the highest priority that hasn't yet been assigned, and remove the edge.
\end{enumerate}
It is not hard to verify that this algorithm will successfully assign priorities to all edges; it suffices to check that at any time the highest remaining priority
customer class will have at most one outgoing edge
to a non-leaf server class.

\begin{rem}\label{rem: priorities}
This algorithm does {\em not} produce a unique assignment of priorities, neither for the customer classes nor for the activities,
because there may be multiple options for picking a next leaf or edge to remove, in the corresponding procedures.
This is not a problem, because our results hold for {\em any} such assignment. 
Different priority assignments  may correspond to different equilibrium points (defined below in Section~\ref{sec: LAP equilibrium}); 
once we have picked a particular priority assignment, there is a (unique) corresponding equilibrium point, 
and we will be showing steady-state tightness around that point. 
Furthermore, the flexibility in assigning priorities may be a useful feature in practice. For example, it is easy to specialize the above
priority assignment procedure so that the lowest priority is given to any a priori picked activity. 
\end{rem}

We will write $(ij) < (i'j')$ to mean that activity $(ij)$ has higher priority than activity $(i'j')$. 
It follows from the priority assignment algorithm that 
$i < i'$ (customer class $i$ has higher priority than $i'$) implies $(ij) < (i'j')$. In particular,  if $j=j'$, we have $(ij) < (i'j)$ if and only if $i < i'$. 
Without loss of generality, we shall assume that the server classes are numbered so that the lowest-priority activity is $(IJ)$. 

Now we define the LAP policy itself. It consists of two parts: routing and scheduling. ``Routing'' determines where an arriving customer goes if it sees available servers of several different types. ``Scheduling'' determines which waiting customer a server picks if it sees customers of several different types waiting in queue.

{\bf Routing:} An arriving customer of type $i$ picks an unoccupied server in the pool $j \in \CSSS(i)$ such that $(ij) \leq (ij')$ for all $j' \in \CSSS(i)$ with idle servers. If no server pools in $\CSSS(i)$ have idle servers, the customer queues.

{\bf Scheduling:} A server of type $j$ upon completing a service picks the customer from the queue of type $i \in \CC(j)$ such that $i \leq i'$ for all $i' \in \CSSS(i)$ with $Q_{i'} > 0$. If no customer types in $\CC(j)$ have queues, the server remains idle.

We introduce the following notation (for the system with scaling parameter $r$): 
$\Psi^r_{ij}(t)$, the number of servers of type $j$ serving customers of type $i$ at time $t$; 
$Q^r_i(t)$, the number of customers of type $i$ waiting for service at time $t$.

Given the system operates under the LAP policy, the process\\
$\left((\Psi^r_{ij}(t),~(ij)\in \CE),(Q^r_i(t),~i \in \CI)\right),~t\ge 0,$\\
is a Markov process with countable state space. 

There are some obvious relations between system variables,
which hold for each process realization: 
for example, for any $j\in \CSSS(i)$ and any time $t$, either
$Q^r_i(t) = 0$ or $\sum_{i'} \Psi^r_{i'j}(t) = \beta_j r$; and so on.

\subsection{LAP equilibrium point}
\label{sec: LAP equilibrium}

Informally speaking, the equilibrium point 
$\left((\psi^*_{ij},~(ij) \in \CE),(q^*_i,~i \in \CI)\right)$
is the desired operating point for the (fluid scaled)
vector $\left((\Psi^r_{ij}/r,~(ij) \in \CE),(Q^r_i/r,~i \in \CI)\right)$ 
of occupancies and queue lengths under the LAP policy. 
The formal definition is given below.

Let us recursively define the quantities $\lambda_{ij}\ge 0$, which have the meaning of routing rates, scaled down by factor $1/r$. 
(These $\lambda_{ij}$ are {\em not} equal to the $\lambda_{ij}^\circ$ which comprise
 the optimal solution to the SPP \eqn{eqn:Static LP}.) For the activity $(1j)$ with the highest priority, define either $\lambda_{1j} = \lambda_1$ and $\psi^*_{1j} = \frac{\lambda_1}{\mu_{1j}}$, or $\psi^*_{1j} = \beta_j$ and $\lambda_{1j} = \beta_j \mu_{1j}$, according to whichever is smaller. Replace $\lambda_1$ by $\lambda_1 - \lambda_{1j}$ and $\beta_j$ by $\beta_j - \psi_{1j}^*$, and remove the edge $(1j)$ from the tree. We now proceed similarly with the remaining activities.

Formally, set
\[
\lambda_{ij} = \min\left(\lambda_i - \sum_{j': (ij') < (ij)} \lambda_{ij'}, \mu_{ij}\left(\beta_j - \sum_{i'<i} \frac{\lambda_{i'j}}{\mu_{i'j}}\right)\right).
\]
Since the definition is in terms of higher-priority activities, this defines the 
$(\lambda_{ij},~(ij) \in \CE)$ uniquely. The LAP equilibrium point is defined to be the vector
\[
\left((\psi^*_{ij},~(ij) \in \CE),(q^*_i,~i \in \CI)\right)
\]
given by
\beql{eqn:LAP equilibrium}
\psi^*_{ij} = \frac{\lambda_{ij}}{\mu_{ij}}, \quad q^*_i = 0 \text{ for all $(ij) \in \CE$, $i \in \CI$}.
\end{equation}
Clearly, by the above construction, we have
$$
\lambda_i = \sum_j \lambda_{ij} = \sum_j \mu_{ij} \psi_{ij}^*, ~~i\in \CI, ~~~
\sum_i \psi_{ij}^* \leq \beta_j, ~~j\in\CJJJ.
$$

To avoid trivial complications,  throughout the paper we make the following assumption:
\begin{assumpt}\label{ass:rho}
If $(\psi_{ij},~(ij) \in \CE)$ are such that $\psi_{ij} \geq 0$, $\lambda_i = \sum_j \mu_{ij} \psi_{ij}$, and $\sum_i \psi_{ij} \leq \beta_j$ for all $j$, then 
$\psi_{ij} > 0$ for all $(ij)\in \CE$.
\end{assumpt}
This assumption implies, in particular, that for the equilibrium point we must have $\psi_{ij}^* > 0$ for all $(ij)\in \CE$ and, moreover,
$\sum_i \psi^*_{ij} = \beta_j$ for all $j < J$ and $\sum_i \psi^*_{iJ} < \beta_J$.

The Assumption~\ref{ass:rho} means that the system needs to employ (on average) all activities in $\CE$ in order to be able to handle the load.
It holds, for example, whenever $\rho$ is sufficiently close to 1.

\begin{rem}
Assumption~\ref{ass:rho} is technical. Our main result, the diffusion-scale tightness in 
Theorem~\ref{th-stationary-scale-main-strong}, can be proved without it, by following the approach presented in the paper.
But, it simplifies the statements and proofs 
of many auxiliary results, and thus substantially improves the exposition.
\end{rem}

\subsection{Discussion of LAP policy features and assumptions}
\label{sec-lap-discussion}

The starting point in the definition of LAP is a fixed set of allowed 
activities $\CE$, and the assumption that it forms a tree.
How the tree $\CE$ is determined is, in a sense, a secondary question.
For example, the structure of the system itself may be such that
the set of {\em all} possible activities is a tree $\CE$. 
If not, $\CE$ can be computed as a set of basic activities of the
static planning problem (SPP) \eqn{eqn:Static LP}. Solving SPP \eqn{eqn:Static LP},
of course, requires the knowledge of parameters $\lambda_i$ and $\mu_{ij}$.
Note, however, that, typically (in the sense 
specified in \cite[Theorem 2.2]{Stolyar_Tezcan}),
small perturbation of parameters 
$\lambda_i$ and $\mu_{ij}$, while changing the LP solution, will {\em not} change
the set of basic activities. Therefore, computing $\CE$ by solving 
SSP \eqn{eqn:Static LP} does {\em not} require {\em exact knowledge}
of the system parameters, and in many cases approximate knowledge 
of the parameters may well be enough to find the ``correct'' set $\CE$.

A typical solution of SPP \eqn{eqn:Static LP} is such that the set of basic activities 
$\CE$  forms a forest (graph without cycles), not necessarily a tree (which is a connected
forest); moreover, within each tree-component of the forest 
the complete resource pooling (CRP) condition will hold.
(Again, see \cite[Theorem 2.2]{Stolyar_Tezcan}.) In this case, the LAP 
algorithm can be applied to each of the tree-components separately.

Finally, we emphasize that while the objective of SPP \eqn{eqn:Static LP}
is load balancing, the LAP algorithm does {\em not} try to balance load
of the server pools.
(Hence the values of $\lambda_{ij}$ that define the equilibrium point in 
Section~\ref{sec: LAP equilibrium} are {\em not} equal to the values
$\lambda_{ij}^\circ$ solving \eqn{eqn:Static LP}.)
Instead of balancing load, LAP algorithm greedily tries to ``pack'' 
customers into pools according to activity priorities. As a result, the equilibrium 
point is such that some of the pools are completely ``packed'',
while other pools (exactly one under simplifying technical Assumption~\ref{ass:rho})
have non-zero fraction of idle servers.

\subsection{Basic notation}

Vector $(\xi_i, ~i\in \CI)$, where $\xi$ can be any symbol, is often written as $(\xi_i)$;
similarly, 
$(\xi_j, ~j\in \CJJJ)=(\xi_j)$
and 
$(\xi_{ij}, ~(ij)\in \CE)=(\xi_{ij})$. 
Furthermore, we often use notation $(\eta_{ij},\xi_i)$   to mean  
$((\eta_{ij}, ~(ij)\in \CE), (\xi_i, ~i\in \CI))$,
and similar notations as well.
Unless specified otherwise, $\sum_i \xi_{ij} = \sum_{i \in \CC(j)} \xi_{ij}$ and $\sum_j \xi_{ij} = \sum_{j \in \CSSS(i)} \xi_{ij}$. For functions (or random processes) $(\xi(t), ~t\ge 0)$ we often write $\xi(\cdot)$. (And similarly for functions with domain different from $[0,\infty)$.) So, for example, $(\xi_i(\cdot))$ signifies $((\xi_i(t), i\in \CI), ~t\ge 0)$. 

In the Euclidean space $\mathbb{R}^d$ 
(with appropriate dimension $d$): $\abs{x}$ denotes standard Euclidean
norm of vector $x$; 
symbol $\to$ denotes ordinary convergence; 
we write simply $0$ for a zero vector. Abbreviation {\em u.o.c.} means 
{\em uniform on compact sets} convergence of functions, 
with the domain defined explicitly or by the context.
We always consider the Borel $\sigma$-algebra on $\mathbb{R}^d$ 
 when it is viewed as a measurable space.
The symbol $\stackrel{w}{\rightarrow}$ denotes weak convergence 
of probability distributions.
 {\em W.p.1} means {\em with probability 1}. We will consider a sequence
of systems indexed by scaling parameter $r$ increasing to infinity, and will use abbreviation
{\em w.p.1-l.r} as a short for {\em w.p.1 for all sufficiently large $r$}.

We denote by $Dist[\xi]$ the distribution of a random element $\xi$, and
by $Inv[\xi(\cdot)]$ the stationary distribution of a Markov process
$\xi(\cdot)$ (it will be unique in all cases that we consider).

\section{Main result}
\label{sec-main-result}

It was shown in  \cite[Theorem 10]{SY2012} that, if the system 
under LAP policy
is strictly subcritically loaded, i.e. $\rho<1$,
then for all large $r$ the Markov process $(\Psi^r_{ij}(\cdot), Q^r_i(\cdot))$ 
is positive recurrent,
has unique stationary distribution\\ $Inv[(\Psi^r_{ij}(\cdot), Q^r_i(\cdot))]$
and, moreover,
the sequence of stationary distributions is tight on the scale $r^{1/2+\epsilon}$ with any $\epsilon>0$.
In this paper we strengthen this result by showing that the invariant distributions are in fact tight 
on the diffusion, i.e. $r^{1/2}$, scale. This is, of course, 
 the strongest possible tightness result for 
the system and the asymptotic regime in this paper. As a consequence, we obtain a limit interchange result: the 
limit of diffusion-scaled invariant distributions is equal to the invariant distribution of the limiting diffusion process.

Denote by
$Z^r_j(t)=\sum_i \Psi^r_{ij}(t) - r\sum_i \psi^*_{ij}$ the ``idleness''
of pool $j$. 
Recall that for each $j<J$, $\sum_i \psi^*_{ij}=\beta_j$ and therefore $Z^r_j(t)\le 0$.
Let $L'$ be the linear mapping (defined in \cite[Section 5.2]{SY2012}),
which takes a vector $(\xi_i)$ with real components into
the vector $(\eta_{ij})$, uniquely solving 
\beql{eq-Lprime-def}
\sum_j \eta_{ij} = \xi_i, ~\forall i, ~~~~~~ \sum_i \eta_{ij} = 0, ~j < J.
\end{equation}

\begin{thm}
\label{th-stationary-scale-main-strong}
Consider the sequence of systems under LAP policy,
 in the scaling regime and under the assumptions
specified in Section~\ref {section:model}, with $\rho<1$.
Then, the sequence of diffusion-scaled stationary distributions,
$Inv[r^{-1/2} (\Psi^r_{ij}(\cdot)-\psi_{ij}^* r, Q^r_i(\cdot))]$, is tight.
Moreover, 
\beql{eq-main-res1}
Inv[r^{-1/2} (\Psi^r_{ij}(\cdot)-\psi_{ij}^* r)] \stackrel{w}{\rightarrow} 
Inv[(\breve \Psi_{ij}(\cdot))], ~~r\to \infty,
\end{equation}
where $(\breve \Psi_{ij}(\cdot))$ is 
the diffusion process, defined by the stochastic differential equation 
\beql{eq-diffusion-psi-generic-1}
d (\breve \Psi_{ij}(t)) =
L' d \left(\sqrt{\lambda_i} B^{(a)}_i(t)\right) 
- L' d\left(\sum_j \sqrt{\mu_{ij} \psi_{ij}^*} B^{(s)}_{ij}(t)\right) -
\end{equation}
$$
 L' \left(\sum_j \mu_{ij} \breve \Psi_{ij}(t)\right) dt,
$$
with all $B^{(a)}_i(\cdot)$ and $B^{(s)}_{ij}(\cdot)$ being independent
standard Brownian motions;\\
and for any $\nu>0$
\beql{eq-main-res2}
Inv[r^{-\nu} \left((Q^r_i(\cdot)),(Z^r_j(\cdot), ~j<J)\right)] 
\stackrel{w}{\rightarrow} Dist[0],  ~~r\to \infty,
\end{equation}
where $Dist[0]$ is the Dirac measure concentrated at the zero vector.
\end{thm}

\begin{rem}
\label{rem-QZ}
Property \eqn{eq-main-res2} shows that 
the distributions of 
all queue lengths and of the idlenesses
in pools $j\ne J$, are tight on the scale $r^{\nu}$ for 
any $\nu>0$. As we will see, this fact is an ``ingredient'' 
of the proof of diffusion-scale tightness and \eqn{eq-main-res1}.
Also, it is not surprizing and is a consequence 
of the priority discipline and (for the queues) of strict subcriticality,
$\rho<1$. As discussed in Section~\ref{sec-lap-discussion},
LAP tries to ``pack'' server pools according to the activity priority order.
As a result, when the idleness in a pool $j\ne J$ is non-zero, then, roughly speaking, 
the arrival rate into the pool exceeds the departure rate by a factor greater than $1$;
similarly, the departure rate from any non-zero queue exceeds the arrival rate by a factor greater than $1$.
Therefore, it is natural to expect that even the stronger property
than \eqn{eq-main-res2} holds, namely the sequence of unscaled stationary
distributions
$
Inv[\left((Q^r_i(\cdot)),(Z^r_j(\cdot), ~j<J)\right)] 
$
is tight. We do not pursue proving this fact in the paper, because 
establishing diffusion-scale tightness and \eqn{eq-main-res1} is our main goal.

\end{rem}

\section{Proof of Theorem~\ref{th-stationary-scale-main-strong}}
\label{sec-main-proof}

In the rest of the paper, we will use the following additional notation for the system variables.
For a system with parameter $r$, we denote:\\
$X^r_i(t)=\sum_j\Psi^r_{ij}(t)+Q^r_i(t)$ is the total number of customers of type $i$ in the system at time $t$;\\
$A^r_i(t)$ is the total number of customers of type $i$ exogenous arrivals into the system in interval $[0,t]$;\\
$D^r_{ij}(t)$ is the total number of customers of type $i$ that completed the service in pool $j$ (and departed the system) in interval $[0,t]$;\\
finally, we will use short notation $F^r(t)=(\Psi^r_{ij}(t)-\psi_{ij}^* r, Q^r_i(t))$.

We can and do assume that a random realization of the system with parameter $r$ is determined by its initial state and realizations of ``driving'' unit-rate, mutually independent, Poisson processes $\Pi_i^{(a)}(\cdot), i\in \CI,$ and 
$\Pi_{ij}^{(s)}(\cdot), (ij)\in \CE$, as follows:
$$
A_i^r(t) = \Pi_i^{(a)}(\lambda_i r t), ~~~~D_{ij}^r(t)=\Pi_{ij}^{(s)}\Bigl(\mu_{ij} \int_0^t \Psi^r_{ij}(u) du \Bigr);
$$
the driving Poisson processes are common for all $r$.
It is easy to see that, given the LAP policy, with probability 1
 the realizations of these driving processes (along with initial state)
indeed uniquely define the system process realization.

Finally, the diffusion scaled variables are defined as follows:
$$
(\hat \Psi^r_{ij}(t), \hat Q^r_i(t))=
r^{-1/2} (\Psi^r_{ij}(t)-\psi_{ij}^* r, Q^r_i(t)),
$$
$\hat X^r_i(t) = r^{-1/2} [X^r_i(t) - \sum_j \psi_{ij}^* r]$,
$\hat Z^r_j(t)=r^{-1/2} Z^r_j(t)$.

Throughout this section, we will use the following  
strong approximation of Poisson processes, 
available e.g. in \cite[Chapters 1 and 2]{Csorgo_Horvath}: 

\begin{prop}\label{thm:strong approximation-111-clean}
A unit rate Poisson process $\Pi(\cdot)$ and 
a standard Brownian motion $W(\cdot)$ can be constructed on a common
probability space in such a way that the following holds
for some fixed positive constants $C_1$, $C_2$, $C_3$:
$\forall T>1$ and
$\forall u \geq 0$
\[
\BP\left(\sup_{0 \leq t \leq T} \abs{\Pi(t) - t - W(t)} \geq C_1 \log T + u\right) \leq C_2 e^{-C_3 u}.
\]
\end{prop}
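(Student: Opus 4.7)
The plan is to bootstrap the $r^{1/2+\epsilon}$-scale tightness of \cite{SY2012} to the sharp $r^{1/2}$-scale tightness by running the stationary process on time intervals of length $O(\log r)$, on which the diffusion-scaled process $\hat F^r$ can be shown to be uniformly close to the limiting Ornstein--Uhlenbeck-type diffusion $\breve \Psi(\cdot)$ defined by \eqn{eq-diffusion-psi-generic}. Because the drift matrix of the limit diffusion has all eigenvalues with strictly negative real parts (local stability of the linearization at the LAP equilibrium), the limit diffusion contracts exponentially, and on an interval of length $c_*\log r$ with $c_*$ sufficiently large, any initial deviation of size $r^{\alpha}$ with $\alpha>1/2$ is shrunk by a factor $r^{-\mu c_*}$. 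Equilibrium plus stationarity then forces $\alpha\le 1/2$.

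Concretely, I would first prove the following \emph{bootstrap lemma}: if $Inv[r^{-\alpha}F^r(\cdot)]$ is tight for some $\alpha\in(1/2,1/2+\epsilon]$, then so is $Inv[r^{-(\alpha-\delta)}F^r(\cdot)]$ for a fixed $\delta>0$ depending only on the spectral gap of the drift matrix. Fix $T_r=c_*\log r$ and let $F^r(0)$ be drawn from the stationary distribution. The $r^{\alpha}$-tightness at time $0$ combined with the fluid-level global stability of LAP (already proved in \cite{SY2012}) controls $\sup_{0\le t\le T_r}|F^r(t)|/r^{\alpha}$ in probability, so w.h.p.\ the process stays on the same scale and, in particular, inside the linearizing neighborhood of the equilibrium. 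On this event one writes $\hat F^r$ as the sum of (i)~a deterministic solution of the linearized ODE started from $\hat F^r(0)$, (ii)~a martingale stochastic integral, and (iii)~a remainder. The linearized ODE contracts by $e^{-\mu T_r}=r^{-\mu c_*}$. The martingale term, via the Poisson strong approximation of Proposition~\ref{thm:strong approximation-111-clean} applied to each driving process $\Pi^{(a)}_i$ and $\Pi^{(s)}_{ij}$, is within $O(\log r/r^{1/2})$ of a stochastic integral of the limiting Brownian motions against the contracting semigroup, and hence contributes $O(r^{1/2}\sqrt{\log r})$ to $|F^r(T_r)|$. Combining, $|F^r(T_r)|=O_P(r^{\alpha-\mu c_*}+r^{1/2}\sqrt{\log r})$; choosing $c_*$ so that $\mu c_*\ge\epsilon+\delta$ gives the advertised improvement by $\delta$. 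Iterating a bounded number of times reduces $\alpha$ to $1/2$.

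With $r^{1/2}$-scale tightness in hand, the limit interchange \eqn{eq-main-res1} follows in the standard way. Any weak subsequential limit of $Inv[\hat F^r(\cdot)]$ is tight and, by the finite-dimensional weak convergence $\hat F^r(\cdot)\Rightarrow\breve\Psi(\cdot)$ started from a stationary initial condition (again obtained by the strong approximation), must be stationary for $\breve\Psi(\cdot)$; uniqueness of the invariant law of the stable linear diffusion then pins down the whole sequential limit. The state-space-collapse statement \eqn{eq-main-res2} for the queues $Q^r_i$ and the idlenesses $Z^r_j$, $j<J$, is a separate, much stronger pointwise estimate. Under LAP, a non-zero $Q^r_i$ or a deficit in pool $j<J$ can only arise from a transient excursion triggered by a server-pool overload, and a closer look at the LAP priority argument from \cite{SY2012} (exactly the mechanism that produced the $r^{1/2+\epsilon}$ bound there) shows that in steady state these variables are stochastically dominated by quantities with tails decaying faster than any polynomial in $r$; dividing by $r^\nu$ and passing to the limit gives the Dirac mass at zero.

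The main obstacle is item (ii) in the bootstrap step: controlling the martingale/noise contribution uniformly over a time window that grows with $r$. This is precisely where Proposition~\ref{thm:strong approximation-111-clean} is indispensable, since its error $O(\log r)$ on $[0,T_r]$ is exactly of the right order to be absorbed after division by $r^{1/2}$, and it is what makes the linearization-and-contraction scheme viable on $O(\log r)$-long intervals. A secondary obstacle is the \emph{a priori} control that the process does not leave the linearizing neighborhood of the equilibrium during the window; this will be handled by a union bound over a partition of $[0,T_r]$ into unit-length sub-intervals, on each of which a one-step hydrodynamic/local-fluid estimate from \cite{SY2012} provides the needed uniform control.
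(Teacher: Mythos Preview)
Your proposal does not address the stated proposition at all. Proposition~\ref{thm:strong approximation-111-clean} is the classical KMT-type strong approximation of a unit-rate Poisson process by a Brownian motion; the paper does not prove it but simply cites it from \cite{Csorgo_Horvath}. What you have written is instead a sketch of a proof of Theorem~\ref{th-stationary-scale-main-strong}, the main diffusion-scale tightness result, which \emph{uses} Proposition~\ref{thm:strong approximation-111-clean} as a tool. There is nothing in your text that constructs the coupling of $\Pi(\cdot)$ and $W(\cdot)$ or derives the exponential tail bound with the $C_1\log T$ threshold; the proposition is invoked in your argument, not established.

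As a side remark, even read as an outline for Theorem~\ref{th-stationary-scale-main-strong}, your scheme differs from the paper's. The paper does not iterate a bootstrap lemma that reduces the exponent $\alpha$ by a fixed $\delta$; instead it (i)~shows in Lemma~\ref{lem-hit-sqrt-r} that from $|F^r(0)|\le r^{1/2+\epsilon}$ the process hits the $O(r^{1/2})$ scale within $\epsilon T\log r$ with high probability, (ii)~uses Lemma~\ref{lem-down-to-subdiffusion-scale} to drive the queues and idlenesses $(Q_i^r,Z_j^r)_{j<J}$ to an $o(r^{\nu_9})$ scale in $O(r^{-1/2})$ time, and only then (iii)~compares $(\hat\Psi_{ij}^r)$ to the limiting diffusion on a single $\epsilon T\log r$ window via a Gronwall bound (Lemma~\ref{lem-close-to-diffusion}), with $\epsilon$ chosen small enough that the Gronwall factor $e^{L_3\epsilon T\log r}$ does not swamp the $r^{-1/9}$ approximation error. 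Your plan to control the process uniformly on $[0,c_*\log r]$ directly from $r^{\alpha}$-scale tightness, and to absorb the noise via the contracting semigroup, skips step~(ii); without first showing that $Q_i^r$ and $Z_j^r$ ($j<J$) are $o(r^{1/2})$, the linear map $L'$ from $(\hat X_i^r)$ to $(\hat\Psi_{ij}^r)$ is not valid and the comparison to the linear SDE \eqn{eq-diffusion-psi-generic} breaks down.
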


We will also need the following form of a functional strong law 
of large numbers for a Poisson process. It is obtained using standard large deviations
estimates, e.g. analogously to the way it is done
 in the proof of \cite[Lemma 4.3]{ShSt2000}. 

\begin{prop}
\label{prop:ShSt}
For a unit rate Poisson process $\Pi(\cdot)$, the following holds with probability $1$.
For any $\nu \in (0,1)$ and any $c>1$, uniformly in $t_1,t_2 \in [0,r^c]$
such that $t_2-t_1 \ge r^{\nu}$,
$$
[\Pi(t_2)-\Pi(t_1)]/[t_2-t_1]\to 1,  ~~r\to \infty.
$$
\end{prop}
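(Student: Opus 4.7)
The plan is to establish the claim first for a fixed pair $\nu\in(0,1)$ and $c>1$, via a discretization plus Chernoff-bound argument for Poisson increments coupled with the Borel--Cantelli lemma, and then to cover all pairs $(\nu,c)$ simultaneously by a monotonicity argument over a countable dense collection. It suffices to consider $r$ ranging over positive integers; the continuous case then follows from monotonicity of $\Pi$ and the fact that $[0,r^c]$ is increasing in $r$.

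Fix $\nu$, $c$, and $\epsilon>0$, and set $h_r = r^{\nu/2}$. Let $G_r = \{k h_r : 0\le k\le \lceil r^c/h_r\rceil\}$ and define the favorable events
\[
A_r = \Big\{ \max_{k}\, [\Pi((k{+}1)h_r) - \Pi(k h_r)] \le 2 h_r \Big\},
\]
\[
B_r = \Big\{\, |\Pi(s')-\Pi(s)-(s'-s)| \le \epsilon(s'-s) \text{ for all } s<s' \in G_r \text{ with } s'-s\ge r^\nu/2 \,\Big\}.
\]
Standard Chernoff bounds for Poisson give $\BP(\mathrm{Poi}(m)\ge 2m)\le e^{-c_0 m}$ and $\BP(|\mathrm{Poi}(m)-m|>\epsilon m)\le 2e^{-c_1\epsilon^2 m}$ for universal positive constants $c_0,c_1$. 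A union bound over the $O(r^{c-\nu/2})$ adjacent grid pairs yields $\BP(A_r^c) = O(r^c e^{-c_0 r^{\nu/2}})$, and a union bound over the $O(r^{2c-\nu})$ pairs in $G_r^2$ with spacing $\ge r^\nu/2$ yields $\BP(B_r^c) = O(r^{2c-\nu} e^{-c_1\epsilon^2 r^\nu/2})$. Both quantities are summable in integer $r$, so Borel--Cantelli implies that with probability one there exists $r_0 = r_0(\epsilon,\nu,c,\omega)$ such that $A_r\cap B_r$ holds for all $r\ge r_0$.

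On this event, for any $t_1<t_2$ in $[0,r^c]$ with $t_2-t_1\ge r^\nu$, let $s_1 = \max\{s\in G_r: s\le t_1\}$ and $s_2 = \min\{s\in G_r: s\ge t_2\}$. Then $0\le (s_2-s_1)-(t_2-t_1)\le 2h_r$, and in particular $s_2-s_1\ge r^\nu/2$ for all large $r$. Monotonicity of $\Pi$ combined with $A_r$ gives $|[\Pi(t_2)-\Pi(t_1)] - [\Pi(s_2)-\Pi(s_1)]|\le 4 h_r$, while $B_r$ gives $|\Pi(s_2)-\Pi(s_1) - (s_2-s_1)|\le \epsilon(s_2-s_1)$. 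Dividing by $t_2-t_1\ge r^\nu$ and combining yields
\[
\left|\frac{\Pi(t_2)-\Pi(t_1)}{t_2-t_1}-1\right| \le \epsilon\,(1+O(r^{-\nu/2})) + O(r^{-\nu/2}),
\]
which is below $2\epsilon$ for all sufficiently large $r$. Since $\epsilon$ is arbitrary, the claim holds for fixed $\nu,c$. To cover all $(\nu,c)\in(0,1)\times(1,\infty)$, pick a countable sequence $(\nu_n,c_n)$ with $\nu_n\downarrow 0$ and $c_n\uparrow \infty$, intersect the corresponding probability-one sets, and observe that for arbitrary $(\nu,c)$, picking $n$ with $\nu_n\le\nu$ and $c_n\ge c$ makes every admissible pair $(t_1,t_2)$ for $(\nu,c)$ also admissible for $(\nu_n,c_n)$. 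The one delicate point, and really the only nontrivial step, is the balancing of exponents in the union bounds: the choice $h_r=r^{\nu/2}$ is precisely what makes both the deterministic discretization error $O(r^{-\nu/2})$ and the Chernoff exponents $\Omega(r^{\nu/2})$ (or larger) small enough for Borel--Cantelli to apply -- a coarser grid would leave the discretization error dominant, while a finer one would weaken the tail bound in $B_r$.
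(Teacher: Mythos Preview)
Your proof is correct and follows essentially the approach the paper indicates: the paper only states that the result ``is obtained using standard large deviations estimates, e.g.\ analogously to the way it is done in the proof of \cite[Lemma~4.3]{ShSt2000},'' and your discretization plus Chernoff/union-bound/Borel--Cantelli argument is precisely such an implementation, with full details supplied. One minor remark: your closing comment that a finer grid ``would weaken the tail bound in $B_r$'' is not quite accurate (any $h_r=r^\alpha$ with $0<\alpha<\nu$ would work, since the exponent in $B_r$ is governed by the spacing $r^\nu/2$, not by $h_r$), but this does not affect the validity of the proof.
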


Throughout this paper, we will use Proposition~\ref{prop:ShSt} with arbitrary
fixed $c>1$: this ensures that for any fixed $T>0$, the interval
$[0,Tr\log r]$ is contained within $[0,r^c]$ for all large $r$.
Proposition~\ref{prop:ShSt}, in particular, immediately implies the following upper bound 
on the rate at which system variables can change.
There exists $C>0$, such that for any $\nu \in (0,1)$ and any $\alpha>0$,
w.p.1-l.r, uniformly in $t_1,t_2 \in [0,r^{c-1}]$
such that $t_2-t_1 \ge \alpha r^{\nu}/r$,
\beql{eq-simple-bound}
\max_{t\in[t_1,t_2]} |Q_i^r(t)-Q_i^r(t_1)| < C (t_2-t_1)r, ~\forall i,
\end{equation}
and similarly for $\Psi_{ij}^r(\cdot), \forall (ij)$, $Z_j^r(\cdot), \forall j$, and 
$F^r(\cdot)$. Indeed, in a system with parameter $r$, the customer 
arrival and departure events occur, ``at most'',  as\\
$\Pi\left( [\sum_i \lambda_i +(\sum_j \beta_j)\max_{(ij)} \mu_{ij}]r\right)$,
where $\Pi(\cdot)$ is a unit rate Poisson
process; therefore, the condition
$t_2-t_1 \ge \alpha r^{\nu}/r$ in the $r$-th system
guarantees that the interval $[t_1,t_2]$ corresponds to at least
$O([t_2-t_1]r)= O(r^{\nu})$-long time interval for $\Pi(\cdot)$, and then 
Proposition~\ref{prop:ShSt} applies.

\begin{lem}
\label{lem-hit-sqrt-r}
There exists $T>0$ such that for any $\epsilon \in (0,1/2)$ 
the following holds.
For any  $\delta>0$, 
there exists a sufficiently large $C_7>0$ such that, 
uniformly on all sufficiently large $r$ and all $|F^r(0)| \le g(r)=r^{1/2 + \epsilon}$,
the probability of $|F^r(t)| \le C_7 r^{1/2}$ occurring within 
$[0,\epsilon T \log r]$ is at least $1-\delta$.
\end{lem}

\begin{proof}
The proof is by contradiction.
If lemma does not hold, then there exists
a function $g_*(r)$ such that
$g_*(r)/r^{1/2}\uparrow\infty$ and the probability 
of starting from $|F^r(0)| \le g(r)$ and not hitting 
$|F^r(t)| \le g_*(r)$ within time $\epsilon T \log r$, does not vanish.
We will prove that it has to vanish, thus establishing a contradiction. 

Denote $|F^r(0)|=h(r)$. We now specify the choice of $T$.
We note that all results in Sections 5.2-5.3  of \cite{SY2012}, concerning hydrodynamic and local-fluid limits,
hold as is for any function $h(r)$ such that $h(r)/r^{1/2}\to\infty$. 
(The condition $h(r) \ge r^{1/2+\epsilon}$ was used in \cite{SY2012} only when 
the results of Sections 5.2-5.3 there were {\em applied}.)
Then, by Corollary 25 and condition (23)
in \cite{SY2012}, we can and do choose
a sufficiently large $T>0$ such that the conditions
\beql{eq-poission-sample-paths777}
\max_{t\in[0,T]} | \Pi_i^{(a)}(\lambda_i rt) - \lambda_i rt | \le \delta_2 h(r), ~\forall i,
\end{equation}
and similar for $\Pi_{ij}^{(s)}, ~\forall (ij)$, 
 with sufficiently small fixed $\delta_2>0$, guarantee that 
condition $g(r) \ge h(r)=|F^r(0)|\ge g_*(r)$
implies that $|F^r|$ decreases at least by a factor $K>1$ in $[0,T]$.
Let us see how the probability of \eqn{eq-poission-sample-paths777}
depends on $h(r)$, or more conveniently on $h_1(r)=h(r)/r^{1/2}$.
(Note that $h_1(r)\uparrow\infty$ when $h(r)\ge g_*(r)$.)

Now we will use Proposition~\ref{thm:strong approximation-111-clean}.
In its statement let us replace $\Pi$ with $\Pi_i^{(a)}$,
$t$ with $\lambda_i rt$, $T$ with $\lambda_i rT$, 
make $u$ a function of $r$,
say $u=r^{1/4}$. Then, with probability at least $1- C_2 e^{-C_3 r^{1/4}}$,
$$
\BP\left\{   \max_{t\in[0,T]}  | \Pi_i^{(a)}(\lambda_i rt) - \lambda_i rt | \le  \max_{t\in[0,T]}  |W(\lambda_i rt)|    + C_1 \log (\lambda_i rT)
+ r^{1/4}
    \right\} \ge 
$$
$$
1- C_2 e^{-C_3 r^{1/4}},
$$
where $C_1, C_2,C_3$ are universal constants (from the statement of Proposition~\ref{thm:strong approximation-111-clean}).
Next, observe that $(W(\lambda_i rt)/h(r), ~t\ge 0)$, where $W(\cdot)$
is a standard Brownian motion, is equal in distribution to
$(\sqrt{\lambda_i}W(t)/h_1(r), ~t\ge 0)$.
Therefore, 
$$
\BP\left\{   \max_{t\in [0,T]}  |W(\lambda_i rt)|    \le (\delta_2/2) h(r) \right\} \ge 1- C_4 e^{-C_5 (h_1(r))^2},
$$
where positive constants $C_4,C_5$ depend on 
$\delta_2$ and $T$ (and system parameters).
We conclude that
the probability of \eqn{eq-poission-sample-paths777} is lower bounded
by
$$
1- C_2 e^{-C_3 r^{1/4}} - C_4 e^{-C_5 (h_1(r))^2}.
$$
Denote
$$
p_i = \BP\{|F^r(t)| \le g_*(r) ~\mbox{for some}~t\in [0, iT]~|~ 
|F^r(0)| \le K^i g_*(r)\}, ~~~i=0,1,2,\ldots
$$
We can write, for any $i\ge 1$,
$$
p_i \ge [1 - C_2 e^{-C_3 r^{1/4}} - C_4 \exp\{-C_5K^{2i}(g_*(r)/r^{1/2})^2\}] p_{i-1}.
$$
We are interested in $p_k$ with $k=\epsilon \log r$, which is
lower bounded as
$$
p_k \ge \prod_{i=1}^k [1 - C_2 e^{-C_3r^{1/4}} - C_4 \exp\{-C_5 K^{2i} g_*(r)^2/r\}]
\ge 
$$
$$
1 - \sum_{i=1}^k [C_2 e^{-C_3r^{1/4}} + C_4 \exp\{-C_5 K^{2i} g_*(r)^2/r\}].
$$
The sum vanishes as $r\to\infty$, and so is $1-p_k$.
\end{proof}

The key part of the rest of the proof of 
Theorem~\ref{th-stationary-scale-main-strong}, is
to show that, informally speaking, if the process ``hits'' 
the set $\{|F^r| \le C_7 r^{1/2}\}$ anywhere within $[0,\epsilon T \log r]$,
then it stays ``on $r^{1/2}$-scale'' at time $\epsilon T \log r$ as well.
To do this we will exploit the closeness
of the diffusion scaled process to the diffusion limit, 
on a $\epsilon T \log r$-long interval (i.e., with length increasing with $r$),
when $\epsilon$ is small enough. This will be formalized 
in Lemma~\ref{lem-close-to-diffusion}, but to apply it we need an additional step, given by the following

\begin{lem}
\label{lem-down-to-subdiffusion-scale}
There exist $T_8>0$ and $C_8>0$ such that the following holds.
For any fixed $C_9>0$, $\delta_9>0$ and $\nu_9\in (0,1/2)$, 
uniformly on initial states
$|F^r(0)|\le C_{9} r^{1/2}$, as $r\to\infty$,
\beql{eq-11111}
\BP\{\max_{t\in [0,T_8 C_9 r^{-1/2}]} |F^r(t)|\le C_8 C_{9} r^{1/2}\} \to 1,
\end{equation}
\beql{eq-22222}
\BP\{\exists t\in [0,T_8 C_9 r^{-1/2}]:~
|(Q_i^r(t))|+ |(Z_j^r(t), j<J)| \le \delta_9 r^{\nu_9}\} \to 1.
\end{equation}
\end{lem}

We will use this lemma (and Lemma~\ref{lem-sup-tight-very} below)
with $0<\nu_9<1/4$. 

\begin{proof}

Let us first discuss the basic intuition behind the result, which is extremely simple,
and will be useful not only for this proof, but for some other proofs in the paper
as well. Within a fixed $O(r^{-1/2})$ time, $F^r(t)$ can change at most by $O(r^{1/2})$ -- 
see \eqn{eq-simple-bound} -- and therefore, for all $(ij)$,
$\Psi^r_{ij}(t)/[\psi_{ij}^* r] \approx 1$
holds. Now, consider the highest-priority activity $(1j)$. Suppose
customer class $1$ is a leaf. Then, there must exist at least one other
activity $(ij)$, associated with the same pool $j$. The arrival rate of
type $1$ is $\lambda_1 r = \mu_{1j}\psi_{1j}^* r$, while the total service completion 
rate at pool $j$ is at least $\mu_{1j}\Psi^r_{1j}(t)+\mu_{ij}\Psi^r_{ij}(t)
\approx \mu_{1j}\psi_{1j}^* r + \mu_{ij}\psi_{ij}^* r = \lambda_1 r + \mu_{ij}\psi_{ij}^* r$.
This means that, since type $1$ has the highest priority at pool $j$, the queue
 $Q_1^r(t)$, when non-zero, ``drains'' at the rate
at least $O(r)$, ``hits'' $r^{\nu_9}$ scale
within $O(r^{-1/2})$ time and ``stays there.''
Suppose now that class $1$ is not a leaf. Then pool $j$ must be a leaf,
i.e. it serves type $1$ exclusively, $\psi_{1j}^*=\beta_j$,
and there must be at least one
other activity $(1m)$, associated with type $1$, implying $\lambda_1 \ge 
\mu_{1j}\psi_{1j}^* + \mu_{1m}\psi_{1m}^* > \mu_{1j}\beta_j$.
The difference between type $1$ arrival rate and the rate they are served 
by pool $j$ is at least $[\lambda_1 - \mu_{1j}\beta_j]r=O(r)$.
This means that the idleness $|Z_j^r(t)|$, when non-zero, decreases at the rate
at least $O(r)$, ``hits'' $r^{\nu}$ scale within $O(r^{-1/2})$ time and ``stays there.''
We ``remove'' activity $(1j)$ from the activity tree.
The argument proceeds by considering all activities $(ij)$ in sequence,
from the highest to lowest priority; at each step either $Q_i^r(t)$
or $Z_j^r(t)$ is ``eliminated'', depending on $i$ or $j$, respectively,
being a leaf of the current activity tree. The exception is when $j=J$ is the pool
serving the lowest priority activity $(IJ)$: in this case $Z_J^r(t)$
is {\em not} eliminated. We now proceed with a sketch of a formal
argument -- details can be easily ``recovered'' by the reader.

The proof of \eqn{eq-11111} is an immediate consequence of \eqn{eq-simple-bound}.
Indeed, for any $T_8>0$, w.p.1-l.r,
 the value of $|F^r(t)-F^r(0)|$ with $t\in [0,T_8 C_9 r^{-1/2}]$
is upper bounded by $C T_8 C_9 r^{1/2}$. So, for any chosen $T_8$ we can 
choose $C_8 > 1+ C T_8$. 

Property \eqn{eq-11111}, in particular, means
that for any fixed $T_8>0$, w.p.1, for any $(ij)\in \CE$,
uniformly in $t\in [0,T_8 C_9 r^{-1/2}]$ we have 
\beql{eq-t1}
\Psi^r_{ij}(t)/[\psi_{ij}^* r] \to 1.
\end{equation}

To prove \eqn{eq-22222}, we consider and ``eliminate''
activities one by one, in the order 
of their priority. The choice of $T_8$ will be made later -- for now it
is a fixed constant, and we consider the process in the interval 
$[0,T_8 C_9 r^{-1/2}]$.
We start with the highest priority activity $(1j)$.
Suppose first that customer class $1$ is a leaf of the activity
tree. (In this case, $\CC(j)$ necessarily contains at least
one customer class in addition to $1$.)
Consider any $0<C_1< \sum_{i\ne 1} \mu_{ij} \psi_{ij}^*$.
Then, for any $\delta>0$, there exists a sufficiently small $\delta_1>0$,
such that, w.p.1-l.r, uniformly in $t\in  [0,T_8 C_9 r^{-1/2}]$,
condition $Q^r_1(t) \ge \delta r^{\nu_9}$ implies 
$Q^r_1(t+\delta_1 r^{\nu_9}/r)-Q^r_1(t) < - C_1 \delta_1 r^{\nu_9}$
(because {\em all} departures from pool $j$ are replaced by class $1$ customers from the queue),
and for any $Q^r_1(t)$ we have (by \eqn{eq-simple-bound}) 
$\max_{\tau\in[0,\delta_1 r^{\nu_9}/r]} Q^r_1(t+\tau) < Q^r_1(t) + C \delta_1 r^{\nu_9}$.
This means that w.p.1. 
$$
\max_{t\in [T', T_8 C_9 r^{-1/2}]} Q^r_1(t) \le (\delta+C\delta_1) r^{\nu_9},
$$
where $T'=2 (1/C_1) C_9 r^{-1/2}$.
Note that this holds for any $\delta$ and the corresponding $\delta_1$,
both of which can be chosen arbitrarily small. We conclude that w.p.1. 
\beql{eq-t2}
\max_{t\in [T', T_8 C_9 r^{-1/2}]} Q^r_1(t)/r^{\nu_9} \to 0.
\end{equation}
This means, in particular, that in $ [T', T_8 C_9 r^{-1/2}]$, 
the number of exogenous class $1$ arrivals matches the number of class $1$ customers entering service,
up to  $o(r^{\nu_9})$ quantities. Formally, the following holds.
Denote by $\Xi_{ij}^r(t_1,t_2)$ 
the number of type $i$ customers that enter service in pool $j$
in the time interval $(t_1,t_2]$. For any fixed $\delta_1>0$,
w.p.1, uniformly in $t_1,t_2\in [T', T_8 C_9 r^{-1/2}]$
such that $t_2-t_1\ge \delta_1 r^{\nu_9}/r$,
\beql{eq-t3}
\Xi_{1j}^r(t_1,t_2)/[\lambda_1 r (t_2-t_1)] \to 1.
\end{equation}
Finally, note that, again by \eqn{eq-simple-bound}, w.p.1-l.r,
at time $T'$, $|F^r|$ is at most by a constant factor 
(depending on $C_1$) larger than $C_{9} r^{1/2}$. 
Our conclusions about the $(1j)$ activity can be informally summarized
as follows:  within a time $T'=2 (1/C_1) C_9 r^{-1/2}$, proportional
to $C_9 r^{-1/2}$, the value of $Q^r_1(t)/r^{\nu_9}$ ``drains to $0$'' 
and ``stays there'' (in the sense of \eqn{eq-t2})
until the end of interval $[0,T_8 C_9 r^{-1/2}]$;
moreover, in the interval $[T',T_8 C_9 r^{-1/2}]$, the rate at which server pool $j$ ``takes''
type $1$ customers is ``equal'' (in the sense of \eqn{eq-t3})
to their arrival rate $\lambda_1 r$.
Therefore, starting time $T'$ we can ``eliminate'' and ``ignore'' activity $(1j)$ in the
sense that we know that 
the rate at which pool $j$ can take for service 
customers of the types other than $1$ is ``at least'' $[\sum_{i\ne 1} \mu_{ij} \psi_{ij}^*]r$.
More precisely, if we denote by $S^r_{(\ne 1),j}(t_1,t_2)$ the number of times 
in the interval $(t_1,t_2]$ when a service completion by a server in pool $j$ was
{\em not} followed (either immediately or after some idle period)
by taking a type $1$ customer for service, then the following holds:
for any fixed $\delta_1>0$,
w.p.1, uniformly in $t_1,t_2\in [T', T_8 C_9 r^{-1/2}]$
such that $t_2-t_1\ge \delta_1 r^{\nu_9}/r$,
\beql{eq-t4}
\frac{S^r_{(\ne 1),j}(t_1,t_2)}{[\sum_{i\ne 1} \mu_{ij} \psi_{ij}^*]r(t_2-t_1)} \to 1.
\end{equation}
Moreover, $|F^r(T')|$ is at most by a factor 
larger than $C_{9} r^{1/2}$, which is the upper bound on $|F^r(0)|$.

Suppose now that class $1$ is not a leaf. Then necessarily poll $j$ is a leaf
and $j<J$.
In this case, by looking at the evolution of idleness $Z^r_j(t)$,
and using similar arguments, we can show that, again,  within a time proportional
to $C_9 r^{-1/2}$, let us call it $T''$,
the value of $Z^r_j(t)/r^{\nu_9}$ ``drains to $0$'' 
and ``stays there'' (in the sense analogous to \eqn{eq-t2})
until the end of interval $[0,T_8 C_9 r^{-1/2}]$;
this in turn means that the rate at which type $1$ customers will 
enter pool $j$ in the interval $[T'',T_8 C_9 r^{-1/2}]$
will be ``equal'' (in the sense analogous to \eqn{eq-t3})
to $\mu_{1j} \beta_j r$.
And again, w.p.1-l.r,
$|F^r(T'')|$ is at most by a constant factor 
larger than $C_{9} r^{1/2}$. 
Therefore, starting time $T''$ we can ``eliminate'' activity $(1j)$
in the sense that we can ``ignore'' pool $j$ and ``assume'' that
the arrival rate of type $1$ customers in the rest of the system is ``equal'' 
to $\lambda_1 r - \mu_{1j} \beta_j r$. (The latter is in the sense analogous to \eqn{eq-t4},
but where we count the type $1$ arrivals that were {\em not} taken for service
in the corresponding interval $(t_1,t_2]$.)

We can proceed to ``eliminate'' the second-highest priority activity, and so on.
The total time for all scaled queues $Q^r_i(t)/r^{\nu_9}$ and 
all idlenesses $Z^r_j(t)/r^{\nu_9}$, $j<J$, to ``drain to $0$'' will be 
proportional to $C_9 r^{-1/2}$, say  $T'_8 C_9 r^{-1/2}$. We then choose $T_8>T'_8$.
We omit further details, except to emphasize again that property \eqn{eq-22222}
does {\em not} include ``idleness'' $Z^r_J$ for the pool $J$ serving
the lowest-priority activity $(IJ)$.
\end{proof}

\begin{lem}
\label{lem-close-to-diffusion}
Let $T>0$ be fixed.
For a sufficiently small $\epsilon>0$ the following holds.
For any fixed $C_{11}>0$, $\delta_9>0$, and $\nu_9\in (0,1/4)$,
uniformly on initial states satisfying
$|F^r(0)|\le C_{11} r^{1/2}$ and
$|(Q_i^r(0), ~\forall i)|+ |(Z_j^r(0), j<J)| \le \delta_9 r^{\nu_9}$,
\beql{eq-close-to-diffusion}
\max_{t\in [0,\epsilon T \log r]} |(\hat \Psi_{ij}^r(t))-(\breve \Psi_{ij}^r(t))|
\implies 0,
\end{equation}
where $(\breve \Psi_{ij}^r(\cdot))$ is a (strongly) unique strong solution 
of the stochastic integral equation \eqn{eq-diffusion-psi}
(constructed on a common probability space with $(\hat \Psi_{ij}^r(\cdot))$),
with the initial state $(\breve \Psi_{ij}^r(0))= (\hat \Psi_{ij}^r(0))$.
\end{lem}

To prove this lemma we will need a series of auxiliary results.

\begin{lem}
\label{lem-sup-tight-very}
There exists $C_{10}>0$ such that the following holds for any $\epsilon>0$, $T>0$,
$C_{11}>0$, $\delta_9>0$ and $\nu_9\in (0,1/2)$.
As $r\to\infty$, uniformly on all initial states such that
$|F^r(0)|\le C_{11} r^{1/2}$ and
$|(Q_i^r(0))|+ |(Z_j^r(0), j<J)| \le \delta_9 r^{\nu_9}$,
we have
\beql{eq-44444}
\BP\{\max_{t\in [0,T \log r]} |F^r(t)|\le r^{1/2+\epsilon}\} \to 1,
\end{equation}
\beql{eq-55555}
\BP\{\max_{t\in [0,T \log r]} [|(Q_i^r(t))|+ |(Z_j^r(t), j<J)|]
\le C_{10}\delta_9 r^{\nu_9}\} \to 1.
\end{equation}
\end{lem}

\begin{proof} 
The proof of property \eqn{eq-44444} is already contained 
in the proof of \cite[Theorem 10(ii)]{SY2012}.
Indeed, that proof considers the process on the interval
$[0,T \log r]$ and shows that, starting with $|F^r(0)|=o(r)$,
w.p.1-l.r, $|F^r(t)|$ ``hits'' $r^{1/2+\epsilon}$-scale
somewhere within $[0,T \log r]$, and then ``stays'' on this scale
until the end of the interval. In our case, $|F^r(0)|$ is already
on the $r^{1/2+\epsilon}$-scale, and so the process w.p.1-l.r
stays in it in the entire interval $[0,T \log r]$.

Given \eqn{eq-44444}, to prove \eqn{eq-55555} we can ``reuse''
the proof of \eqn{eq-22222} of Lemma~\ref{lem-down-to-subdiffusion-scale}.
In that proof we showed that starting $|F^r(0)|=O(r^{1/2})$,
w.p.1-l.r, the quantity $[|(Q_i^r(t), ~\forall i)|+ |(Z_j^r(t), j<J)|]$
``hits $r^{\nu_9}$-scale '' within an $O(r^{-1/2})$-long time interval and ``stays there''
until the end of that time interval. (See \eqn{eq-t2}.)
In our case, the initial state 
is already such that $|(Q_i^r(0), ~\forall i)|+ |(Z_j^r(0), j<J)|=O(r^{\nu_9})$,
and therefore this quantity stays $O(r^{\nu_9})$ in the entire interval.
The fact that here we consider a much longer interval,
namely $O(\log r)$ as opposed to $O(r^{-1/2})$, is immaterial,
because \eqn{eq-44444}, and therefore \eqn{eq-t1},
holds on the entire interval and $r\log r = o(r^c)$
(so that we can use Proposition~\ref{prop:ShSt}).
We omit further details.
\end{proof}

\begin{prop}
\label{prop-strong-wrt-SBM}
There exists a set of independent standard Brownian motions,\\
$W^{(a)}_i(\cdot)$  and 
$W^{(s)}_{ij}(\cdot)$, constructed on the same probability space 
as the set of Poisson processes $\Pi^{(a)}_i(\cdot)$ and 
$\Pi^{(s)}_{ij}(\cdot)$, such that the following holds.
For any fixed $T>0$, as  $r\to\infty$:\\
 for each $i$
\beql{eq-strong-i}
\sup_{0 \leq t \leq T\log r} r^{-1/4} \abs{\Pi^{(a)}_i(r t) - rt
-W^{(a)}_i(r t)} 
\to 0, ~~~\mbox{w.p.1},
\end{equation}
and for each $(ij)\in \CE$
\beql{eq-strong-ij}
\sup_{0 \leq t \leq T\log r} r^{-1/4} \abs{\Pi^{(s)}_{ij}(r t) - rt
-W^{(s)}_{ij}(r t)} 
\to 0, ~~~\mbox{w.p.1}.
\end{equation}
\end{prop}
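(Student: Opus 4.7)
The plan is to apply Proposition~\ref{thm:strong approximation-111-clean} separately to each of the (finitely many) driving Poisson processes, and then stitch the results together using the independence of those processes. For each individual Poisson process $\Pi^{(a)}_i$ (respectively $\Pi^{(s)}_{ij}$), Proposition~\ref{thm:strong approximation-111-clean} produces, on an enlargement of its probability space, a coupled standard Brownian motion $W^{(a)}_i$ (respectively $W^{(s)}_{ij}$) satisfying the strong-approximation tail bound. Because the driving Poisson processes are independent, I can perform these constructions on a single enlarged probability space so that the resulting Brownian motions inherit mutual independence; since $\CI$ and $\CE$ are finite, union bounds over $i$ and $(ij)$ are harmless.

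With the couplings in hand, the next step is to apply the tail inequality on the time-rescaled interval $[0,rT\log r]$. Setting $\tilde T=rT\log r$ and $u=u_r$ in Proposition~\ref{thm:strong approximation-111-clean}, I want $u_r$ to satisfy two competing requirements: $C_1\log\tilde T+u_r=o(r^{1/4})$, so that exceeding $r^{1/4}$ forces exceeding $C_1\log\tilde T+u_r$; and $C_2 e^{-C_3 u_r}$ summable in integer $r$, so that the Borel--Cantelli lemma applies. The choice $u_r=(2/C_3)\log r$ works: the first quantity is $O(\log r)=o(r^{1/4})$, while the second is $C_2 r^{-2}$, giving a convergent series. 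By Borel--Cantelli, for each fixed $i$ the event $\{\sup_{0\le t\le T\log r}|\Pi^{(a)}_i(rt)-rt-W^{(a)}_i(rt)|\ge r^{1/4}\}$ occurs for only finitely many integer $r$ w.p.1, which is precisely the $r^{-1/4}$-smallness along the integers. The same argument applies verbatim to each $\Pi^{(s)}_{ij}$, and a final union bound over the finite index set gives the simultaneous statement.

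To extend from integer $r$ to continuous $r\to\infty$ I will use monotonicity: for $r\in[n,n+1]$, the supremum over $[0,rT\log r]$ is dominated by the corresponding supremum over $[0,2nT\log n]$ (for large $n$), while $r^{-1/4}\le n^{-1/4}$. Applying the integer-$r$ statement with $T$ replaced by a slightly larger constant yields convergence along all real $r$. I do not anticipate a genuine obstacle; the only real bookkeeping point is the joint choice of $u_r$, balancing the logarithmic error term against the need for a summable tail bound, together with the appeal to independence of the driving Poisson processes (which is explicit in the model) so that the coupled Brownian motions can themselves be taken to be independent.
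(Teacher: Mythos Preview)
Your argument is correct and follows essentially the same route as the paper: apply Proposition~\ref{thm:strong approximation-111-clean} on the interval $[0,rT\log r]$ with a suitable $u=u_r$, then invoke Borel--Cantelli. The only cosmetic difference is that the paper takes $u=r^{1/8}$ (giving a stretched-exponential tail) whereas you take $u_r=(2/C_3)\log r$ (giving a polynomial tail); both choices satisfy $C_1\log(rT\log r)+u_r=o(r^{1/4})$ with summable error probabilities, and your explicit attention to the independence of the coupled Brownian motions and to the passage from integer to continuous $r$ is additional detail that the paper leaves implicit.
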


\begin{proof} This follows from Proposition~\ref{thm:strong approximation-111-clean}:
in its statement we 
replace $t$ with $rt$, $T$ with $rT\log r$, and $u$ with $r^{1/8}$. 
\end{proof}

\begin{prop}
\label{prop-SBM-on-logr}
Consider any sequence of standard Brownian motions,\\ $B_1(\cdot),B_2(\cdot),\ldots$,
defined on a common probability space. (They may be dependent.)
Let $T>0$, $C_{12}>0$ and $\epsilon\in (0,1/4)$ be fixed. 
Then, w.p.1-l.r,
conditions $t_1,t_2 \in [0,T\log r]$ and $|t_2-t_1|\le C_{12} r^{-1/2+\epsilon}$
imply that $|B_r(t_2)-B_r(t_1)|<r^{-1/8}$.
\end{prop}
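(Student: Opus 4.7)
The plan is to use a Borel--Cantelli argument: define $A_r$ to be the event that some pair $t_1,t_2 \in [0,T\log r]$ with $|t_2-t_1|\le C_{12} r^{-1/2+\epsilon}$ satisfies $|B_r(t_2)-B_r(t_1)|\ge r^{-1/8}$, and show $\sum_r \BP(A_r)<\infty$. Because Borel--Cantelli does not require independence, this will immediately yield w.p.1-l.r the conclusion of the proposition even though the $B_r(\cdot)$ may be arbitrarily dependent; the lack of independence is in fact the only thing that prevents a more direct a.s.\ argument, but it is sidestepped by this quantitative route.

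First I would discretize: set $\delta_r = C_{12} r^{-1/2+\epsilon}$ and partition $[0,T\log r]$ into $N_r = \lceil T(\log r)/\delta_r \rceil = O(r^{1/2-\epsilon}\log r)$ subintervals $I_k^r = [k\delta_r,(k+1)\delta_r]$. Any pair $t_1,t_2$ at distance at most $\delta_r$ lies in some $I_k^r\cup I_{k+1}^r$, so by the triangle inequality
\[
|B_r(t_2)-B_r(t_1)| \le 2\max_{k} \sup_{t\in I_k^r}|B_r(t)-B_r(k\delta_r)|.
\]
Thus it suffices to bound, for each $k$, the probability that $\sup_{t\in I_k^r}|B_r(t)-B_r(k\delta_r)|\ge r^{-1/8}/2$.

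For each $k$, by stationarity of increments and the reflection principle,
\[
\BP\!\left(\sup_{t\in I_k^r}|B_r(t)-B_r(k\delta_r)|\ge r^{-1/8}/2\right) \le 4\,\BP\!\left(|B(\delta_r)|\ge r^{-1/8}/2\right) \le 4\exp\!\left(-\frac{r^{-1/4}}{8\delta_r}\right) = 4\exp\!\left(-\frac{1}{8C_{12}}\,r^{1/4-\epsilon}\right),
\]
using the standard Gaussian tail bound and the fact that $\epsilon<1/4$. A union bound over the $N_r$ subintervals gives
\[
\BP(A_r) \le 4 N_r \exp\!\left(-\tfrac{1}{8C_{12}}\,r^{1/4-\epsilon}\right) \le C\,r^{1/2-\epsilon}(\log r)\exp\!\left(-\tfrac{1}{8C_{12}}\,r^{1/4-\epsilon}\right),
\]
which is summable in $r$ because $1/4-\epsilon>0$. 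Borel--Cantelli then yields $\BP(A_r\text{ i.o.})=0$, i.e., w.p.1-l.r the event $A_r$ does not occur, completing the proof. The main (minor) obstacle is simply ensuring that the exponent $x^2/\delta_r$ in the Gaussian tail overwhelms the polynomial number of subintervals; the choice $\epsilon<1/4$ combined with $x=r^{-1/8}$ is exactly what makes $x^2/\delta_r = (1/C_{12})\,r^{1/4-\epsilon}$ grow as a positive power of $r$, which is what drives everything.
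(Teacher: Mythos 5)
Your proof is correct and follows essentially the same route as the paper: partition $[0,T\log r]$ into $O(r^{1/2-\epsilon}\log r)$ subintervals of length $C_{12}r^{-1/2+\epsilon}$, bound the oscillation on each by a Gaussian tail of order $\exp(-c\,r^{1/4-\epsilon})$, take a union bound, and apply Borel--Cantelli (which, as you note, needs no independence across $r$). The only nit is that the triangle-inequality step over two consecutive subintervals gives a factor $3$ rather than $2$ in front of the max oscillation, but this is immaterial to the argument.
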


\begin{proof} 
This follows from basic properties of Brownian motion.\\
Fix $\epsilon'\in (1/8,1/4-\epsilon/2)$.
Then for some fixed $C_{13}>0$,
\beql{eq-event}
\BP\{\max_{t\in[0,C_{12} r^{-1/2+\epsilon}]}|B_r(t)-B_r(0)| \ge r^{-\epsilon'}\}
\le \exp\{-C_{13} [r^{-\epsilon'}/r^{-1/4+\epsilon/2}]^2\}.
\end{equation}
This probability decays very fast with $r$. 
We divide the interval $[0,T\log r]$ into (polynomial in $r$ number of)
$C_{12} r^{-1/2+\epsilon}$-long subintervals, and use the above 
probability estimate for each of them; by Borel-Cantelli lemma,
w.p.1-l.r, the event (analogous to the event) in \eqn{eq-event} will not hold for any of the 
subintervals. The result follows. 
\end{proof}

\begin{proof}[Proof of Lemma~\ref{lem-close-to-diffusion}]
Suppose for each $r$ the initial state is fixed that
satisfies conditions of the lemma.
Suppose the process, for any $r$, is driven by a common set of 
Poisson processes, and associated Brownian motions
constructed on the same probability space, 
as specified in Proposition~\ref{prop-strong-wrt-SBM}.
It will suffice to show that for any subsequence of $r$, there exists a further
subsequence, along which the lemma conclusion holds.
So, let us fix an arbitrary subsequence of $r$.
We fix any $\nu_9\in (0,1/4)$
and choose a further subsequence of $r$, 
with $r$ increasing sufficiently fast,
so that w.p.1-l.r the events in the displayed formulas in Lemma~\ref{lem-sup-tight-very}
hold.

Denote:
$$
\hat A^r_i(t)=r^{-1/2}[\Pi^{(a)}_i(\lambda_i r t) - \lambda_i rt], 
~~~ \hat W^{(a),r}_i(t)=r^{-1/2} W^{(a)}_i(\lambda_i r t),
$$
$$
\hat D^r_{ij}(t)=r^{-1/2}[\Pi^{(s)}_{ij}(\mu_{ij} \psi_{ij}^* r t) - \mu_{ij} \psi_{ij}^* rt],
~~~ \hat W^{(s),r}_{ij}(t)=r^{-1/2} W^{(s)}_{ij}(\mu_{ij} \psi_{ij}^* r t).
$$
Note that, {\em for any $r$}, 
the {\em law} of 
$\left((\hat W^{(a),r}_i(\cdot)), (\hat W^{(s),r}_{ij}(\cdot))\right)$ 
is equal to that of\\
$\left((\sqrt{\lambda_i} B^{(a)}_i(\cdot)), (\sqrt{\mu_{ij} \psi_{ij}^*} B^{(s)}_{ij}(\cdot))\right)$,
where all $B^{(a)}_i(\cdot)$ and $B^{(s)}_{ij}(\cdot)$ are independent 
standard Brownian motions.

Using standard sample path representation (see e.g. \cite{PTW}), 
we can write, for each $i$, and all $t\ge 0$:
\beql{eq-integral-prelim-for-x-unscaled}
X_{i}^r(t) = X_{i}^r(0) + A^r_i(t)
- \sum_j D^r_{ij}\left(\mu_{ij} \int_0^t \Psi_{ij}^r(s) ds \right).
\end{equation}
Switching, again in a standard way, to diffusion-scaled variables
and to a ($I$-dimensional) vector form, we rewrite \eqn{eq-integral-prelim-for-x-unscaled}
as 
\beql{eq-integral-prelim-for-x}
(\hat X_{i}^r(t)) = (\hat X_{i}^r(0)) + (\hat A^r_i(t))
- \left( \sum_j \hat D^r_{ij}\left((\psi^*_{ij} r t)^{-1} [\int_0^t \Psi_{ij}^r(s) ds] t\right)
\right) - 
\end{equation}
$$
\left(\sum_j \int_0^t \mu_{ij} \hat \Psi_{ij}^r(s) ds\right).
$$

Suppose $\epsilon\in (0,1/4)$ (so that we can apply 
Proposition~\ref{prop-SBM-on-logr} later). We will make the choice of $\epsilon$
more specific below.

We claim that
w.p.1-l.r the following properties hold
uniformly for $t\in [0,T\log r]$: 
\beql{eq-error1}
|\hat A^r_i(t)-\hat W^{(a),r}_i(t)| < r^{-1/4}, ~\forall i,
~~~|\hat D^r_{ij}(t)-\hat W^{(s),r}_{ij}(t)| < r^{-1/4}, ~\forall (ij),
\end{equation}
\beql{eq-error2}
|(\psi^*_{ij} r t)^{-1} [\int_0^t \Psi_{ij}^r(s) ds]t - t| \le r^{-1/2+\epsilon} \epsilon T \log r
< r^{-1/2+\epsilon'}, ~\forall (ij),
\end{equation}
\beql{eq-error3}
|L'(\hat X_{i}^r(t)) - (\hat \Psi_{ij}^r(t))| < r^{-1/4},
\end{equation}
where $\epsilon'$ is a fixed number within $(\epsilon,1/4)$ and linear maping
$L'$ is defined by \eqn{eq-Lprime-def}. 
($L'$ was defined in \cite[Section 5.2]{SY2012}.
It maps a vector of {\em centered}
customer quantities into the vector of {\em centered} occupancies,
{\em assuming all queues and idlenesses in pools $j<J$ are zero.})
Indeed: properties \eqn{eq-error1} follow from Proposition~\ref{prop-strong-wrt-SBM};
property \eqn{eq-error2} follows from \eqn{eq-44444} in Lemma~\ref{lem-sup-tight-very};
property \eqn{eq-error3} follows from \eqn{eq-55555} in Lemma~\ref{lem-sup-tight-very}
and the definition of operator $L'$.

Using properties \eqn{eq-error1}-\eqn{eq-error3}, 
the sample path relation \eqn{eq-integral-prelim-for-x} implies
the following relation (written in vector form, with components
indexed by $(ij)$), which holds
w.p.1-l.r
uniformly for $t\in [0,T\log r]$:
\beql{eq-integral-prelim-psi}
(\hat \Psi_{ij}^r(t)) = (\hat \Psi_{ij}^r(0)) 
+ L'\left(\hat W^{(a),r}_i(t)\right) - L'\left(\sum_j \hat W^{(s),r}_{ij}(t)\right) -
\end{equation}
$$
 L'\left(\sum_j \int_0^t \mu_{ij} \hat \Psi_{ij}^r(s) ds \right)
+ (\Delta_i^r(t)),
$$
where $|(\Delta_i^r(t))|< r^{-1/9}$. (Instead of $1/9$ we could use any fixed number
in $(0,1/8)$.) Indeed, in \eqn{eq-integral-prelim-for-x} we can replace
$\hat A_i^r$ and $\hat D_{ij}^r$ with $\hat W_i^{(a),r}$ and $\hat W_{ij}^{(s),r}$,
respectively, which introduces an $o(r^{1/4})$ error by \eqn{eq-error1};
then, we apply operator $L'$ to both sides and replace $L'(\hat X_i^r)$ with
$(\hat \Psi_{ij}^r)$, which introduces an $o(r^{1/4})$ error by \eqn{eq-error3};
finally, we replace time $(\psi^*_{ij} r t)^{-1} [\int_0^t \Psi_{ij}^r(s) ds]t$
with $t$ in the argument of $\hat W_{ij}^{(s),r}$, which introduces an $O(r^{1/8})$ error by \eqn{eq-error2}
and Proposition~\ref{prop-SBM-on-logr}.

For each $r$ and each initial
condition $(\hat \Psi_{ij}^r(0))$, 
in addition to \eqn{eq-integral-prelim-psi}
consider the (strongly) unique strong solution
(by Theorems 5.2.9 and 5.2.5 of \cite{Karatzas_Shreve}) 
$(\breve \Psi_{ij}^r(\cdot))$ of the stochastic integral equation
\beql{eq-diffusion-psi}
(\breve \Psi_{ij}^r(t)) = (\breve \Psi_{ij}^r(0)) 
+ L'\left(\hat W^{(a),r}_i(t)\right) - L'\left(\sum_j \hat W^{(s),r}_{ij}(t)\right) -
\end{equation}
$$
 L'\left(\sum_j \int_0^t \mu_{ij} \breve \Psi_{ij}^r(s) ds \right),
$$
driven by the same set of Brownian motions 
$\left(\hat W^{(a),r}_i(\cdot),\hat W^{(s),r}_{ij}(\cdot)\right)$
and with the same initial condition $(\breve \Psi_{ij}^r(0))=(\hat \Psi_{ij}^r(0))$.
Thus, solutions to both \eqn{eq-integral-prelim-psi} and \eqn{eq-diffusion-psi},
for all $r$, 
are constructed on the same probability space associated with the underlying set 
of independent Brownian motions (and the corresponding Poisson processes coupled with them).
W.p.1-l.r we have for $t\in [0,T\log r]$:
$$
|(\hat \Psi_{ij}^r(t))-(\breve \Psi_{ij}^r(t))| \le 
|(\Delta_i^r(t))| + \int_0^t C' |(\hat \Psi_{ij}^r(s))-(\breve \Psi_{ij}^r(s))|ds,
$$
with some constant $C'>0$.
By Gronwall inequality (see e.g. Theorem 5.1 in Appendix 5 of \cite{Ethier_Kurtz}), for $t\in [0,\epsilon T\log r]$:
\beql{eq-gronwall-2}
|(\hat \Psi_{ij}^r(t))-(\breve \Psi_{ij}^r(t))| \le r^{-1/9} e^{C' \epsilon T \log r}
= r^{-1/9+\epsilon C' T}.
\end{equation}
We now specify the choice of $\epsilon$:  it is such that both $-1/8+\epsilon C' T<0$
and (for the reasons explained earlier) $\epsilon <1/4$ hold. In other words,
$0<\epsilon < \min\{1/4,1/(9C' T)\}$.
\end{proof}

Recall that {\em for any $r$} the {\em law} of the multi-dimensional Brownian motion\\
$\left(\hat W^{(a),r}_i(\cdot), \hat W^{(s),r}_{ij}(\cdot)\right)$,
driving equation \eqn{eq-diffusion-psi}, is same as that of\\
$\left(\sqrt{\lambda_i} B^{(a)}_i(\cdot), \sqrt{\mu_{ij} \psi_{ij}^*} B^{(s)}_{ij}(\cdot)\right)$,
where all $B^{(a)}_i(\cdot)$ and $B^{(s)}_{ij}(\cdot)$ are independent
standard Brownian motions. Therefore, {\em for any $r$}, the {\em law}
of the solution to \eqn{eq-diffusion-psi} is equal to that of the solution
to stochastic differential equation
\beql{eq-diffusion-psi-generic}
d (\breve \Psi_{ij}(t)) =
L' d \left(\sqrt{\lambda_i} B^{(a)}_i(t)\right) 
- L' d\left(\sum_j \sqrt{\mu_{ij} \psi_{ij}^*} B^{(s)}_{ij}(t)\right) -
\end{equation}
$$
 L' \left(\sum_j \mu_{ij} \breve \Psi_{ij}(t)\right) dt,
$$
with same initial state $(\breve \Psi_{ij}(0))=(\breve \Psi_{ij}^r(0))$.
This is equation \eqn{eq-diffusion-psi-generic-1}.
Moreover, the drift term in \eqn{eq-diffusion-psi-generic} can be written as
$$
- L'\left(\sum_j \mu_{ij} \breve \Psi_{ij}(t)\right)dt = L \left(\breve \Psi_{ij}(t)\right) dt,
$$
where matrix $L$ is easily checked to be 
exactly the 
matrix in the ODE $d (\tilde \psi_{ij}(t))= L (\tilde \psi_{ij}(t)) dt$
for the local fluid model, which follows from conditions (24) in \cite{SY2012}.
From \cite[Theorem 23]{SY2012} we know that 
{\em all eigenvalues of $L$ have negative real parts.}

\begin{prop}
\label{prop-stable-diffision}
Uniformly on all fixed initial conditions $(\breve \Psi_{ij}(0))$
from any fixed bounded set, the corresponding solutions 
to the stochastic differential equation \eqn{eq-diffusion-psi-generic}
have the following properties. Uniformly on all $t\ge 0$, 
the random vector $(\breve \Psi_{ij}(t))$ 
is Gaussian, with bounded mean and covariance matrix.
Moreover, as $t\to\infty$, 
the mean vector and the covariance matrix of $(\breve \Psi_{ij}(t))$
converge to those of the unique stationary distribution,
$Inv[(\breve \Psi_{ij}(\cdot))]$,
which is Gaussian with zero mean.
\end{prop}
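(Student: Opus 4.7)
The plan is to recognize that equation \eqref{eq-diffusion-psi-generic} is a linear SDE of Ornstein--Uhlenbeck type, with constant diffusion coefficient and linear drift $L_2$ whose eigenvalues all have strictly negative real parts. All conclusions then follow from standard textbook facts about multivariate OU processes, combined with exponential decay of $e^{L_2 t}$.

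First, I would write out the explicit closed-form solution. Setting $\Sigma_0$ for the (constant) covariance matrix of the vector of driving increments $d((\sqrt{\lambda_i} B_i^{(a)}(t)), (\sqrt{\mu_{ij}\psi_{ij}^*} B_{ij}^{(s)}(t)))$, we have by variation of parameters
\[
(\breve\Psi_{ij}(t)) = e^{L_2 t}(\breve\Psi_{ij}(0)) + \int_0^t e^{L_2(t-s)} L_1 \, dW(s),
\]
where $W$ denotes the underlying multidimensional standard Brownian motion. This is a pathwise identity, so $(\breve\Psi_{ij}(t))$ is an affine functional of a Gaussian process, hence Gaussian. Its mean is $m(t)=e^{L_2 t}(\breve\Psi_{ij}(0))$ and its covariance matrix is $\Sigma(t)=\int_0^t e^{L_2 s} L_1 \Sigma_0 L_1^\top e^{L_2^\top s}\,ds$, which does not depend on the initial condition.

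Second, I would use the spectral hypothesis on $L_2$. Since every eigenvalue of $L_2$ has real part at most $-\alpha$ for some $\alpha>0$, there is a constant $C>0$ with $\|e^{L_2 t}\|\le C e^{-\alpha t}$ for all $t\ge 0$. This immediately gives $|m(t)|\le C e^{-\alpha t}|(\breve\Psi_{ij}(0))|$, so the mean is uniformly bounded over $t\ge 0$ and over initial conditions in any fixed bounded set, and $m(t)\to 0$ as $t\to\infty$. The exponential bound also shows the integrand defining $\Sigma(t)$ is dominated in norm by $C^2 \|L_1 \Sigma_0 L_1^\top\| e^{-2\alpha s}$, so $\Sigma(t)$ is uniformly bounded in $t$ and converges, as $t\to\infty$, to the (finite) limit $\Sigma_\infty=\int_0^\infty e^{L_2 s} L_1 \Sigma_0 L_1^\top e^{L_2^\top s}\,ds$, which is the unique solution of the Lyapunov equation $L_2 \Sigma_\infty + \Sigma_\infty L_2^\top + L_1\Sigma_0 L_1^\top = 0$.

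Third, I would deduce existence and uniqueness of the stationary distribution. Gaussianity of $(\breve\Psi_{ij}(t))$ plus convergence of both its mean and covariance implies weak convergence of its law, uniformly over initial conditions in any bounded set, to the Gaussian law with mean $0$ and covariance $\Sigma_\infty$; this law is therefore stationary. For uniqueness, any stationary distribution must be supported by the attractor of the deterministic drift combined with the non-degenerate noise; more directly, if $\pi$ is stationary with finite first and second moments, integrating $(\breve\Psi_{ij}(t))$ against $\pi$ in the closed-form solution above and taking $t\to\infty$ forces $\pi$ to have mean $0$ and covariance $\Sigma_\infty$, and since $\pi$ is Gaussian (as a weak limit of Gaussians along the Cesàro averages of a bounded starting distribution) this pins down $\pi$; standard OU ergodicity covers the case of general stationary distributions. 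This gives $Inv[(\breve\Psi_{ij}(\cdot))] = \mathcal{N}(0,\Sigma_\infty)$, which is exactly the assertion of the proposition.

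The only mildly delicate point is the uniqueness of the stationary distribution without a priori moment assumptions, but this is classical for linear SDEs with Hurwitz drift (e.g., Theorem 5.6.7 of Karatzas--Shreve, or any standard reference on linear Gaussian diffusions); everything else is routine Gronwall-style exponential decay once the eigenvalue condition on $L_2$ from \cite[Theorem 23]{SY2012} is in hand.
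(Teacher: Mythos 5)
Your proof is correct and is essentially the same as the paper's: the paper's proof simply cites the relevant OU-process facts in Karatzas--Shreve ((5.6.12), (5.6.13)$'$, (5.6.14)$'$, Problem 5.6.6, Theorem 5.6.7), and your argument unpacks exactly that content---variation-of-parameters representation, exponential decay of $e^{L_2 t}$ from the Hurwitz condition on $L_2$, boundedness and convergence of the Gaussian mean and covariance, and the Lyapunov-equation characterization of $\Sigma_\infty$---so there is no substantive difference in approach.
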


\begin{proof}
This follows from the fact that all eigenvalues of the
drift matrix $L$ have negative real parts: see (5.6.12),
(5.6.13)', (5.6.14)', Problem 5.6.6 and Theorem 5.6.7 in \cite{Karatzas_Shreve}.
\end{proof}

\begin{proof}[Conclusion of the proof of Theorem~\ref{th-stationary-scale-main-strong}]
Consider Markov process $F^r(\cdot)$ in stationary regime.
We choose $T$ as in Lemma~\ref{lem-hit-sqrt-r}, 
then $\epsilon$ as in Lemma~\ref{lem-close-to-diffusion}, and consider the process
 in the interval $[0,\epsilon T \log r]$. Fix arbitrary $\nu_9\in (0,1/4)$.
The combination of \cite[Theorem 10(ii)]{SY2012}, Lemma~\ref{lem-hit-sqrt-r}
and Lemma~\ref{lem-down-to-subdiffusion-scale} shows
the following fact: uniformly on all sufficiently large $r$, 
the process will ``hit'' a state, satisfying conditions 
of Lemma~\ref{lem-close-to-diffusion}, with probability that 
can be made arbitrarily close to $1$ by choosing sufficiently large
fixed $C_{11}>0$.

Now, suppose at some time point
within $[0,\epsilon T \log r]$ the process is in a state
satisfying conditions 
of Lemma~\ref{lem-close-to-diffusion}. 
First, we obtain a bound on $|F^r(\epsilon T \log r)|$. Namely,
uniformly on all sufficiently large $r$, 
$|F^r(\epsilon T \log r)|\le C_{14} r^{1/2}$
with probability that
can be made arbitrarily close to $1$ by choosing sufficiently large
fixed $C_{14}>0$.
This follows
from Lemma~\ref{lem-close-to-diffusion} and
Proposition~\ref{prop-stable-diffision}.
This establishes the tightness of the sequence of 
$Inv[(\hat \Psi^r_{ij}(\cdot))]\equiv Inv[r^{-1/2} (\Psi^r_{ij}(\cdot)-\psi_{ij}^* r)]$.
Second, we obtain a bound on\\
$|(Q_i^r(\epsilon T \log r))|+ |(Z_j^r(\epsilon T \log r), j<J)|$.
This is even easier -- by \eqn{eq-55555} in Lemma~\ref{lem-sup-tight-very}
$$
\BP\{|(Q_i^r(\epsilon T \log r))|+ |(Z_j^r(\epsilon T \log r), j<J)|
\le C_{10}\delta_9 r^{\nu_9}\} \to 1.
$$
But, since $\nu_9$ can be chosen arbitrarily
small, we obtain property \eqn{eq-main-res2}.

Given the tightness of the sequence
of $Inv[(\hat \Psi^r_{ij}(\cdot))]$ and property \eqn{eq-main-res2}, it is straightforward
to show the remaining property \eqn{eq-main-res1}. 
(The argument is essentially same as that in the proof of 
 \cite[Theorem 8.5.1]{Liptser_Shiryaev}, although that result
does not directly apply to our setting.) Consider Markov process $F^r(\cdot)$ 
in stationary regime. We fix arbitrary $T>0$, $\delta_9>0$ and $\nu_9\in (0,1/4)$,
and then a large enough parameter 
$C_{11}>0$, so that, with probability arbitrarily close to $1$,
 the conditions on $F^r(0)$ in Lemma~\ref{lem-close-to-diffusion}
are satisfied for all large $r$. We then pick a sufficiently small fixed
$\epsilon>0$, so that property \eqn{eq-close-to-diffusion} holds.
Finally, using Proposition~\ref{prop-stable-diffision}, 
we pick a sufficiently large $T'>0$, 
so that $Dist[(\breve \Psi_{ij}(T'))]$ is close 
to $Inv[(\breve \Psi_{ij}(\cdot))]$,
uniformly on the initial states
$|(\breve \Psi_{ij}(0))| \le C_{11}$.
(Here 'close' is in the sense of close Gaussian distribution parameters, 
means and covariances; or, more generally, it can be in the sense of
Prohorov metric \cite{Ethier_Kurtz}.)
Note that, for all large $r$,
$T' < \epsilon T \log r$. Applying Lemma~\ref{lem-close-to-diffusion},
we see that, for all large $r$,
 $Dist[(\hat \Psi_{ij}^r(T'))]$ is close to
$Dist[(\breve \Psi_{ij}^r(T'))]$, which in turn is close to
$Inv[(\breve \Psi_{ij}^r(\cdot))]=Inv[(\breve \Psi_{ij}(\cdot))]$; 
and we can make it arbitrarily close by rechoosing
parameters. This implies \eqn{eq-main-res1}. We omit further details.
\end{proof}

\section{Discussion}
\label{sec-discussion}

As already mentioned in the Introduction, we believe that the approach 
developed in \cite{SY2012} and this paper provides quite generic scheme
for establishing diffusion-scale tightness of invariant distributions,
under the strictly subcritical load $\rho<1$. 
The approach shows that for the diffusion-scale tightness to hold,
it is essentially sufficient to verify the two key stability properties -- global stability
and local stability -- which we (at a high level and informally) describe next.
Let $F^r(\cdot)$ be a process describing the system state deviation from the 
equilibrium point. (For the LAP policy,
$F^r(t)=(\Psi^r_{ij}(t)-\psi_{ij}^* r, Q^r_i(t))$ as defined in this paper.)

{\em (a) Global stability.} The fluid limit $f(t), ~t\ge 0,$ is defined 
as\\ $\lim_r r^{-1} F^r(t), ~t\ge 0$. By global stability we mean the following property:
(a.1) the trajectories $f(t)$ converge to $0$, uniformly in the initial states
from a bounded set. Moreover, we also require the following
related property to hold: (a.2) uniformly on all infinite initial states, $|f(0)|=\infty$,
each trajectory $f(t)$ reaches a state,
where all server pools are fully occupied, and then stays in such a state forever.
(For the LAP policy, the formal statements are 
\cite[Propositions 13 and 16]{SY2012}.)

{\em (b) Local stability.} Suppose $h(r)$ is a function of $r$ such that
$h(r)/r\to 0$ and $h(r)/\sqrt{r} \to \infty$. The local fluid limit
$\tilde f(t), ~t\ge 0,$ is defined as\\ $\lim_r h(r)^{-1} F^r(t), ~t\ge 0$. 
Suppose, the trajectories $\tilde f(\cdot)$ satisfy a linear ODE 
$(d/dt)\tilde f(t) = L \tilde f(t)$. By local stability we 
mean the property that all eigenvalues of $L$ have negative real parts.
(For the LAP, the formal statement is \cite[Theorem 23]{SY2012}.
For the LQFS-LB policy of \cite{SY10}, the local stability does {\em not} hold.)

Properties (a) and (b) may or may not be easy to verify
for a given control policy; but the task of proving or disproving them
is typically much easier than the full task of verifying the diffusion-scale
tightness. We also note that showing local stability may require working
with the process under additional space and/or time scalings, 
such as hydrodynamic scaling for LAP (see \cite[Section 5.2]{SY2012}).

If the global and local stability properties hold,
the steps of establishing diffusion-scale tightness of invariant distributions
are as follows.

{\em Step 1. Existence and $o(r)$-scale tightness of invariant distributions.}
Using the global stability property (a.2)
and employing the total (appropriately defined) workload
in the system as a Lyapunov function, one can prove the positive recurrence
(stochastic stability) of the process, and therefore existence of a stationary 
distribution. The proof is fairly standard, uses
 Lyapunov function average drift argument, which additionally shows that
$\BE |r^{-1} F^r|$ is bounded, which in turn applies the tightness
of distributions of $r^{-1} F^r$. We then employ the global stability property
(a.1) to show that, in fact, the invariant distributions of 
$r^{-1} F^r$ asymptotically concentrate at $0$. 
This can be referred to as $o(r)$-scale tightness.
(The formal result for LAP is in \cite[Theorem 14]{SY2012}.)

{\em Step 2. $r^{1/2+\epsilon}$-scale tightness.} Local stability implies 
exponentially fast convergence of fluid limit trajectories  $\tilde f(\cdot)$
to $0$. In particular, 
for a sufficiently large fixed $T$, the norm $|\tilde f(t+T)|\le \delta |\tilde f(t)|$, 
where $\delta<1$.
We use this, and probability estimates for deviations of $h(r)^{-1} F^r(t)$
from a corresponding local fluid limit $\tilde f(t)$, to show that 
if $F^r(0)=h(r)=o(r)$ then with high probability $|F^r(T)| \le \delta |F^r(0)|$.
Now, it takes $O(\log r)$ intervals of length $T$ for $|F^r|$ to ``descend''
from $o(r)$ to $r^{1/2+\epsilon}$, and we show that this does in fact happen with 
high probability. (So, the key technical issue here is that we 
have to do probability estimates not on a finite, but on an $O(\log r)$ interval.)
This implies $r^{1/2+\epsilon}$-scale tightness, for any $\epsilon>0$;
namely, the invariant distributions of 
$r^{-1/2-\epsilon} F^r$ asymptotically concentrate at $0$.
(The formal argument for LAP is in \cite[Section 5.2]{SY2012}.)
Note that this property is {\em weaker} than, for example,
$\BE |r^{-1/2-\epsilon} F^r| \to 0$.

{\em Step 3. Diffusion-scale ($r^{1/2}$-scale) tightness.} 
Here we start with the $r^{1/2+\epsilon}$-scale tightness, with $\epsilon>0$
being sufficiently small. We show that if $|F^r(0)|=O(r^{1/2+\epsilon})$, then,
with high probability, $|F^r(t)|$ ``hits the diffusion scale'' 
$O(r^{1/2})$ within $\epsilon \log r$.
Again, this is achieved by considering $O(\log r)$ consecutive $T$-long
intervals, in each of which $|F^r|$ must decrease by a factor with high probability,
unless $|F^r(t)|$ does hit $O(r^{1/2})$. (The formal result for LAP
is Lemma~\ref{lem-hit-sqrt-r}.) Given that, it remains to show that 
if $|F^r(0)|=O(r^{1/2})$ and $\epsilon$ is small enough, 
then for any $t\in [0,\epsilon \log r]$, we also have
$|F^r(t)|=O(r^{1/2})$ with high probability. This is done by showing
the closeness of process $r^{-1/2} F^r(\cdot)$ to the corresponding limiting
diffusion process {\em on the $\epsilon \log r$-long interval}, 
and the fact that the drift matrix of the diffusion
process is exactly the $L$ matrix from the definition of local stability. 
(For LAP, this takes the bulk of this paper,
from Lemma~\ref{lem-down-to-subdiffusion-scale} on.
It involves, in particular, showing that all queues and all pool idlenesses,
except for pool $J$ serving the lowest priority activity, are in fact $o(r^{\nu})$ 
for any $\nu>0$.)
Again, we note that the diffusion-scale tightness is {\em weaker} than, for example,
the boundedness of $\BE |r^{-1/2} F^r|$.

In conclusion, we remark again that many (although not all) parts of the above scheme
do rely on the strict subcriticality condition $\rho<1$.
It would be of interest to explore whether the approach can be extended to
establishing diffusion-scale tightness in the Halfin-Whitt regime.

\bibliographystyle{apt}
\bibliography{tightness-strong}

\end{document}